\documentclass[]{amsart}

\usepackage{amsmath}
\usepackage{amsthm}
\usepackage{amsfonts}
\usepackage{amssymb}
\usepackage[shortlabels]{enumitem}
\usepackage{todonotes}
\usepackage{mathtools}
\usepackage{csquotes}
\usepackage{changes}

\usepackage[style=alphabetic,sorting=nyt,backend=bibtex8, maxbibnames=99]{biblatex}
\bibliography{reference.bib}

\usepackage{hyperref}
\hypersetup{colorlinks=true}

\usepackage{graphicx}

\newcommand{\upupharpoonright}{%
  \upharpoonright
  \mathrel{\mkern-4.5mu}%
  \upharpoonright
}

\newcommand{\ubar}[1]{\text{\b{$#1$}}}

\makeatletter
\newcommand{\oast}{\mathbin{\mathpalette\make@circled\ast}}
\newcommand{\make@circled}[2]{%
  \ooalign{$\m@th#1\smallbigcirc{#1}$\cr\hidewidth$\m@th#1#2$\hidewidth\cr}%
}
\newcommand{\smallbigcirc}[1]{%
  \vcenter{\hbox{\scalebox{0.77778}{$\m@th#1\bigcirc$}}}%
}
\makeatother

\newtheorem{theorem}{Theorem}[]
\newtheorem{proposition}[theorem]{Proposition}
\newtheorem{corollary}[theorem]{Corollary}
\newtheorem{lemma}[theorem]{Lemma}
\newtheorem{claim}{Claim}[theorem]
\newtheorem{subclaim}{Claim}[claim]
\theoremstyle{definition}

\newtheorem{question}{Question}
\newtheorem*{question*}{Question}

\theoremstyle{remark}

\begin{document}

\title[Real degrees in the side-by-side Sacks model]{Constructibility real degrees in the\\side-by-side Sacks model}

\author{Lorenzo Notaro}
\address{University of Vienna, Institute of Mathematics, Kurt G\"{o}del Research Center, Kolingasse 14-16, 1090 Vienna, Austria}
\curraddr{}
\email{lorenzo.notaro@univie.ac.at}
\date{}

\begin{abstract}
We study the join-semilattice of constructibility real degrees in the side-by-side Sacks model, i.e., the model of set theory obtained by forcing with a countable-support product of infinitely many Sacks forcings over the constructible universe. In particular, we prove that in the side-by-side Sacks model the join-semilattice of constructibility real degrees is rigid, i.e., it does not have non-trivial automorphisms.
\end{abstract}

\thanks{The author would like to acknowledge INdAM for the financial support. This research was also partially supported by the project PRIN 2022 “Models, sets and classifications”, prot. 2022TECZJA. This research was funded in whole or in part by the Austrian Science Fund (FWF) \href{https://www.fwf.ac.at/en/research-radar/10.55776/ESP1829225}{10.55776/ESP1829225}. For open access purposes, the author has applied a CC BY public copyright license to any author accepted manuscript version arising from this submission.}

\subjclass[2020]{Primary 03E35, Secondary 03E45}
\keywords{constructibility degrees, real degrees, Sacks forcing, side-by-side Sacks model, Sacks model}
\maketitle

\section{Introduction}

In \cite{MR276079}, Sacks introduced the perfect-set forcing---i.e., the forcing notion consisting of perfect closed sets of reals ordered by inclusion. This forcing, also known as \emph{Sacks forcing}, has been widely used in descriptive set theory due to its feature of adding a particularly tame generic real of minimal degree of constructibility. 

In \cite{MR556894}, Laver and Baumgartner introduced the \emph{iterated Sacks model}. This model is obtained by forcing over a model of $\mathsf{CH}$ (often the constructible universe) with a countable-support iteration of $\omega_2$-many Sacks forcings. It has been the subject of intensive study (see e.g. \cites{MR3981132,MR1942314, MR2640071, MR1978947}), mainly due to its rich combinatorial theory, well enucleated by Pawlikowski and Ciesielski's \emph{Covering Property Axiom} ($\mathsf{CPA}$) \cite{MR2176267}.

Furthermore, the \emph{side-by-side Sacks model}, which is obtained by forcing over a model of $\mathsf{CH}$ with a countable-support product of infinitely many Sacks forcings, has also been studied (see e.g. \cites{MR794485, MR987317, MR1900391, MR3702751}). 

Much is already known on the structure of the constructibility real degrees in models obtained by forcing over the constructible universe with either an iteration or a finite product of Sacks forcings: forcing with a countable-support iteration of $\omega_2$-many Sacks forcings results in the constructibility real degrees being well-ordered with order-type $\omega_2$ \cites{MR556894, MR2940963}; forcing with a product of $n$ Sacks forcings, for some $n\in\omega$, results in the constructibility real degrees being isomorphic to the powerset lattice of $n$ (see e.g. \cite{MR1777770}).

Here we address the case that has been less explored: What properties do the constructibility real degrees satisfy in the side-by-side Sacks model? 

Our main results are the following, which show that an infinite product of Sacks forcings behaves very differently, at least real-degrees-wise, compared to a finite product. Here, $\mathrm{L}[G]$ is the generic extension of $\mathrm{L}$ obtained by a countable-support product of infinitely many Sacks forcings.

\begin{theorem}\label{thm:main1}
In $\mathrm{L}[G]$, $(\mathcal{D}_c, \le_c)$ is neither a meet-semilattice, nor $\sigma$-complete, nor complemented.
\end{theorem}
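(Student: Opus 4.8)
The plan is to derive all three failures from the product structure recorded above. Write $\langle s_i : i\in I\rangle$ for the generic Sacks reals, $G_a$ for the restriction of $G$ to coordinates in $a\subseteq I$, and recall that every real of $\mathrm{L}[G]$ lies in $\mathrm{L}[G_a]$ for some countable $a$ (capturing), while $s_j\not\le_c x$ whenever $x\in\mathrm{L}[G_a]$ and $j\notin a$ (mutual genericity). The least degree is $[\mathbf 0]$, that of the $\mathrm{L}$-reals. For non-complementedness I would first observe that $(\mathcal{D}_c,\le_c)$ has no greatest element: any $x$ lies in some $\mathrm{L}[G_a]$ with $a$ countable, and since $I$ is uncountable there is $j\in I\setminus a$, whence $[s_j]\not\le_c[x]$ and $[x]$ is not a top. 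As a complemented structure must possess a greatest element, this rules out complementedness outright.

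For the other two failures I would work inside the first $\omega$ coordinates. Put $\mathbf d_n=[\langle s_0,\dots,s_n\rangle]$; mutual genericity makes $\mathbf d_0<_c\mathbf d_1<_c\cdots$ strictly increasing, so the ideal $\mathcal I$ they generate has no greatest element. The central reduction is that it suffices to exhibit an \emph{exact pair} for $\mathcal I$: degrees $\mathbf a,\mathbf b$, both above every $\mathbf d_n$, whose common lower bounds are exactly $\mathcal I$. Such a pair has no infimum, since an infimum would be a greatest element of $\mathcal I$; hence $\mathcal{D}_c$ is not a meet-semilattice. Moreover $\{[s_n]:n\in\omega\}$ then has no supremum: a supremum would lie below both $\mathbf a$ and $\mathbf b$, hence belong to $\mathcal I$, yet it would dominate every $\mathbf d_n$, which no member of $\mathcal I$ does; hence $\mathcal{D}_c$ is not $\sigma$-complete.

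To produce the exact pair I would take $\mathbf a=[s]$ with $s=\langle s_n:n\in\omega\rangle$ (so $\mathrm{L}[s]=\mathrm{L}[G_\omega]$) and build $\mathbf b=[y]$ from a fresh block of mutually generic Sacks reals $\langle t_n:n\in\omega\rangle$ on coordinates disjoint from $\omega$, arranging that (a) $s_n\in\mathrm{L}[y]$ for every $n$, and (b) every real of $\mathrm{L}[y]\cap\mathrm{L}[G_\omega]$ already lies in some finite piece $\mathrm{L}[s_0,\dots,s_n]$. Granting (a) and (b), each $\mathbf d_n$ is below both $\mathbf a$ and $\mathbf b$, because a finite tuple of members of $\mathrm{L}[y]$ is itself a member of $\mathrm{L}[y]$; and any common lower bound is represented by a real of $\mathrm{L}[s]\cap\mathrm{L}[y]=\mathrm{L}[G_\omega]\cap\mathrm{L}[y]$, which by (b) has degree in $\mathcal I$. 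Thus $(\mathbf a,\mathbf b)$ is exact for $\mathcal I$. Note that (b) is exactly what forces $s\notin\mathrm{L}[y]$: the finite approximations $\langle s_0,\dots,s_n\rangle$ survive into $\mathrm{L}[y]$ while the infinite sequence $s$ does not.

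The technical heart, and the step I expect to be hardest, is meeting (a) and (b) at once, for they pull in opposite directions: (a) requires $y$ to decode each $s_n$, whereas (b) forbids $y$ from decoding any infinite amalgam of the $s_n$, in particular the alignment $n\mapsto s_n$. The resolution must be non-uniform — each $s_n$ should become definable in $\mathrm{L}[y]$ only through a parameter that is not uniformly available, so that no location-by-$n$ recovery assembles the whole of $s$ — and this is precisely where the mutual genericity of the $t_n$ over $\mathrm{L}[G_\omega]$ is used. I would verify (b) by a fusion argument: capturing puts any $z\in\mathrm{L}[y]\cap\mathrm{L}[G_\omega]$ on a countable support, and the Sacks property together with the genericity of the $t_n$ should force a pure-$s$ real of $\mathrm{L}[y]$ to depend on only finitely many coordinates $s_n$. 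Designing the encoding of $s$ into $y$ so that this fusion succeeds while (a) is preserved is the crux of the whole argument.
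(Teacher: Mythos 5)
Your reduction of non-complementedness to the absence of a greatest degree only covers the case of an uncountable index set. The theorem also covers the countable-support product of $\omega$ many Sacks forcings, and there $(\mathcal{D}_c,\le_c)$ \emph{does} have a greatest element (the whole generic sequence is equiconstructible with a single real, and every real of $\mathrm{L}[G]$ lies below it), so "complemented implies bounded" gives you nothing. The paper handles this case by exhibiting a specific degree with no complement, namely that of $s_0\cup\{0\}$.

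The more serious problem is the exact-pair plan: your conditions (a) and (b) are jointly unsatisfiable in this model, so the step you single out as the technical heart is not merely hard but impossible. The obstruction is precisely the content of Proposition~\ref{prop:main} and Theorem~\ref{thm:representation}: every real $y\in\mathrm{L}[G]$ is equiconstructible with the set $A_y=\{s\in S\mid s\le_c y\}$ of generic Sacks reals below it, and by Proposition~\ref{prop:definA} the set $A_y$ constructs its own coordinate enumeration. Hence if $s_n\le_c y$ for every $n<\omega$ (your (a)), then $\{s_n\mid n<\omega\}\le_c A_y\equiv_c y$, and by Corollary~\ref{cor:realset} the real $s$ coding the whole sequence satisfies $s\le_c y$, contradicting (b). Equivalently, in the representation $(\mathcal{R},\subseteq)$ the family $\{\{n\}\mid n<\omega\}$ has least upper bound $\omega\in\mathcal{R}$, so the ideal generated by your $\mathbf{d}_n$ is principal and admits no exact pair; in particular $\{[s_n]\mid n\in\omega\}$ does have a supremum, so it cannot witness the failure of $\sigma$-completeness. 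The counterexamples have to exploit the self-referential membership clause defining $\mathcal{R}$: the paper takes $s_0^+=s_0\cup\{0\}\in\mathcal{R}$ and $s_0^-=s_0\setminus\{0\}\notin\mathcal{R}$, and shows that $\{s_0^+,\;\omega\setminus\{0\}\}$ has no infimum and $\{\{n\}\mid n\in s_0^-\}$ has no supremum, because in each case the unique candidate is forced to equal $s_0^-$, which is not in $\mathcal{R}$.
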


\begin{theorem}\label{thm:main2}
In $\mathrm{L}[G]$, $(\mathcal{D}_c, \le_c)$ is rigid, i.e., it has no non-trivial automorphisms.
\end{theorem}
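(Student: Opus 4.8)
The plan is to analyze the automorphism $\Phi$ through its action on the atoms of $(\mathcal{D}_c,\le_c)$ and then to propagate rigidity upward using the weak homogeneity of the countable-support product together with the mutual genericity of the Sacks reals. Write $0$ for the bottom degree (the constructibility degree of the reals of $\mathrm{L}$) and recall that the join is $[x]\vee[y]=[x\oplus y]$. First I would pin down the atoms: since each coordinate real $s_i$ is $\mathbb{S}$-generic over $\mathrm{L}$, Sacks minimality gives that $[s_i]$ is an atom, and a fusion/splitting argument (of the kind already needed for Theorem~\ref{thm:main1}) should show conversely that every atom is some $[s_i]$, so that the atoms are exactly $\{[s_i] : i\in I\}$ and are in canonical bijection with the index set $I$. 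Any automorphism $\Phi$ preserves the order and hence the set of atoms, so it induces a permutation $\sigma\in\mathrm{Sym}(I)$ of the coordinates, living in $\mathrm{L}[G]$. The proof then splits into showing $\sigma=\mathrm{id}$ and showing that an atom-fixing automorphism is the identity.

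For $\sigma=\mathrm{id}$: suppose toward a contradiction that $\sigma(i_0)=j_0\neq i_0$, so $\Phi([s_{i_0}])=[s_{j_0}]$. The engine is that distinct coordinates of $\mathbb{P}$ are interchanged by automorphisms $\pi$ of the forcing (coordinate permutations preserve countable support), while distinct Sacks reals are never interconstructible (by mutual genericity). The strategy is to locate a countable set $C_0\subseteq I$ of coordinates that captures the data determining $\Phi$ near $[s_{i_0}]$, choose a fresh coordinate $j_1\notin C_0\cup\{j_0\}$, and let $\pi$ be the ground-model transposition of $j_0$ and $j_1$. If $\Phi$ can be named by a $\pi$-invariant name $\dot\Phi$ (because $\pi$ fixes the relevant data and $\Phi$ is canonical given that data), then applying $\pi$ to a condition forcing $\dot\Phi([\check s_{i_0}])=[\check s_{j_0}]$ yields a condition forcing $\dot\Phi([\check s_{i_0}])=[\check s_{j_1}]$, whence $s_{j_0}\equiv_c s_{j_1}$ --- contradicting non-interconstructibility. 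Thus no coordinate can move and $\sigma=\mathrm{id}$.

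It then remains to show that an automorphism fixing every atom is the identity. Here the difficulty is that, by the analysis behind Theorem~\ref{thm:main1}, there are ``diagonal'' degrees $d$ with infinite support $C$ that lie below no atom at all (they read only finitely much of each $s_i$, $i\in C$); for such $d$ the order over the atoms cannot see the true support, so a priori $\Phi$ could move $d$ to another atomless degree with a different support. I would handle these by the same homogeneity-plus-genericity mechanism: a nontrivial displacement $\Phi(d)=d'\neq d$ of such a degree would, after transporting by a suitable coordinate permutation fixing the data that determines $\Phi$ on the interval below $d$, force two distinct generic diagonal reals to be interconstructible, again a contradiction. Combining this with order-preservation on the atomic part should force $\Phi$ to fix every degree.

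The main obstacle, and where the real work lies, is the invariance/canonicity step: an arbitrary automorphism $\Phi\in\mathrm{L}[G]$ is a size-$\mathfrak{c}$ function and need not \emph{a priori} be definable from a small amount of coordinate data, so it is not obvious that $\Phi$ admits a name invariant under the coordinate permutations $\pi$ that I want to apply. Making the homogeneity argument go through therefore requires a structural lemma showing that the behaviour of any automorphism on (the interval below) a given degree is determined by the generic restricted to a countable set of coordinates, so that a permutation of fresh coordinates genuinely fixes $\Phi$ there. Establishing this ``local determination'' of automorphisms---and verifying it uniformly for both the atomic degrees and the atomless diagonal degrees---is the crux on which the whole argument turns.
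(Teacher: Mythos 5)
Your proposal correctly identifies that the atoms of $(\mathcal{D}_c,\le_c)$ are exactly the degrees $[s_\alpha]$ and that an automorphism therefore induces a permutation of the index set, but both halves of the plan rest on steps that are not supplied, and they are exactly where the substance of the proof lies. The first gap is the one you flag yourself: an automorphism $\Phi\in\mathrm{L}[G]$ is an arbitrary object of the extension, and there is no reason it should admit a name invariant under a coordinate transposition, nor is there an obvious ``local determination'' lemma saying that its behaviour near a given degree depends only on the generic restricted to a countable set of coordinates in a way that a permutation of fresh coordinates respects. Without that, the Symmetry Lemma step does not get off the ground, and I do not see how to prove such a lemma short of already knowing the structure of the degrees. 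The paper avoids homogeneity entirely for this theorem. It first proves the representation $(\mathcal{D}_c,\le_c)\cong(\mathcal{R},\subseteq)$, where $\mathcal{R}\subseteq[\kappa]^{\le\omega}$ is the family of countable sets closed under ``$s_\alpha\le_c x\Rightarrow\alpha\in x$''; any automorphism is then induced by a bijection $\psi$ of $\kappa$ preserving $\mathcal{R}$. For an \emph{arbitrary} such $\psi$ in the extension, a ground-model fusion argument (Lemma~\ref{lemma:rigid}) traps it by constructible data: pairwise disjoint finite sets $(A_n)$ and ordinals $(\alpha_n)$ with $\psi(\alpha_n)\in A_n$. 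Assuming $\psi(0)=1$, the set $r=\{0\}\cup\bigcup_{n\in s_0}A_n$ satisfies $r\equiv_c s_0$, hence $r\in\mathcal{R}$ with $1\notin r$; but $n\in s_0\iff\alpha_n\in\psi^{-1}(r)$ gives $s_0\le_c\psi^{-1}(r)$, forcing $0\in\psi^{-1}(r)$ and contradicting $1\notin r$. No invariant name for $\psi$ is ever needed.

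The second gap is the passage from ``fixes all atoms'' to ``is the identity''. You are right that this is not automatic, but the resolution is not more homogeneity: it is the structural fact (Proposition~\ref{prop:main}) that every real degree is equiconstructible with the set of Sacks reals below it. In particular there are no nontrivial ``atomless diagonal'' degrees at all --- a real above no $s_\alpha$ is constructible --- and every degree is determined by its set of atoms, so an atom-fixing automorphism is trivially the identity. Proving this is the hard content of the paper (it is the heart of Theorem~\ref{thm:representation}), and your sketch for handling the diagonal degrees by transporting $\Phi$ along coordinate permutations runs into the same unproved invariance problem as the first step. So the proposal is not yet a proof: the two lemmas it defers are the actual theorem, and the mechanism proposed for both is not the one that works.
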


\begin{theorem}\label{thm:main3}
In $\mathrm{L}[G]$, apart from the least and greatest (if it exists) real degrees, no other real degree is definable in $(\mathcal{D}_c, \le_c)$.
\end{theorem}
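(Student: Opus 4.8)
The plan is to determine the parameter-free definable elements of $(\mathcal{D}_c,\le_c)$ by locating its definable closure $\mathrm{dcl}(\emptyset)$. First I would observe that the two exceptional degrees are trivially definable: the least degree $\mathbf{0}$ (the degree of the constructible reals) is defined by $\forall y\,(x\le_c y)$, and, when it exists, the greatest degree $\top$ is defined by $\forall y\,(y\le_c x)$. It therefore remains to show that every degree $d$ with $d\neq\mathbf{0}$ and (if a top exists) $d\neq\top$ lies outside $\mathrm{dcl}(\emptyset)$. Since an element is undefinable as soon as it shares its complete type over $\emptyset$ with a distinct element, the whole theorem reduces to the following statement: for every such intermediate $d$ there is a degree $d'\neq d$ with $\mathrm{tp}(d'/\emptyset)=\mathrm{tp}(d/\emptyset)$ in $(\mathcal{D}_c,\le_c)$.

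To produce such a twin I would exploit the symmetry of the product $\mathbb{P}$ under permutations of its index set $\kappa$. Fix a real $x$ with $\deg(x)=d$ and let $a\subseteq\kappa$ be a countable support of $x$, so that $x\in\mathrm{L}[G\upharpoonright a]$. Using that $\kappa$ is infinite, choose a set $a'\subseteq\kappa$ disjoint from $a$ together with a coordinate isomorphism $a\to a'$, and let $x'$ be the corresponding copy of $x$ computed from $G\upharpoonright a'$; put $d'=\deg(x')$. Because $a$ and $a'$ are disjoint and carry mutually generic columns, a support/minimality argument gives $d'\neq d$. The heart of the matter is to verify $\mathrm{tp}(d')=\mathrm{tp}(d)$, which I would do by a finite back-and-forth (Ehrenfeucht--Fra\"iss\'e) argument: any first-order formula mentions only finitely many auxiliary degrees, each with countable support, so all supports involved live in some countable $b\subseteq\kappa$; a permutation of $\kappa$ fixing the relevant overlap and carrying the configuration around $a$ onto the one around $a'$ then matches the two pointed configurations, and the coordinate-invariance of $\mathbb{P}$ together with mutual genericity should guarantee that it preserves every instance of $\le_c$ among the degrees in play.

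The main obstacle is to reconcile this with the rigidity established in Theorem~\ref{thm:main2}, and this is exactly where care is required. Rigidity forbids a \emph{global} automorphism of $(\mathcal{D}_c,\le_c)$ carrying $d$ to $d'$, so the back-and-forth system must \emph{not} amalgamate into a single isomorphism; what I am after is only elementary equivalence of the pointed structures $(\mathcal{D}_c,\le_c,d)$ and $(\mathcal{D}_c,\le_c,d')$, i.e.\ that Duplicator wins the $n$-round game for every $n$, rather than that a winning strategy survives into the limit. Keeping the partial isomorphisms extendable for arbitrarily many rounds while never closing them up into a total map is the delicate bookkeeping step. The genuinely technical input, which I expect to lean on the fusion machinery and the Sacks property of the product, is the claim that the constructibility relations among finitely many relocated degrees depend only on the combinatorial pattern of their supports and not on the particular coordinates used; once this ``coordinate-freeness'' of $\le_c$ on finite configurations is in hand, the Ehrenfeucht--Fra\"iss\'e argument, and hence the non-definability of every intermediate degree, should follow.
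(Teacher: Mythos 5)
Your reduction to finding a twin $d'\neq d$ realizing the same type is a legitimate (indeed stronger) sufficient condition, and the idea of exploiting coordinate symmetry of the product is the right instinct. But the step you flag as ``the genuinely technical input'' --- that the constructibility relations among relocated degrees depend only on the combinatorial pattern of supports, so that Duplicator can answer Spoiler by permuting coordinates --- is precisely where the argument breaks, and not for the reason you anticipate. The obstruction is not that the partial isomorphisms might amalgamate into a forbidden global automorphism; it is that a permutation of $\kappa$ does not act on $(\mathcal{D}_c,\le_c)$ at all. By the representation theorem the structure is $(\mathcal{R},\subseteq)$, and $\mathcal{R}$ is \emph{not} invariant under permutations of the index set: for instance $s_0\cup\{0\}\in\mathcal{R}$ while $s_0\setminus\{0\}\notin\mathcal{R}$, so the transposition $(0\ 1)$ sends elements of $\mathcal{R}$ outside $\mathcal{R}$. (This failure of coordinate-invariance is exactly what the rigidity proof of Theorem~\ref{thm:main2} exploits.) Consequently, in your Ehrenfeucht--Fra\"{\i}ss\'e game Duplicator has no permutation-induced response to Spoiler's moves: the candidate response $\theta[x]$ need not be a degree of the structure. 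Your observation that ``all supports involved live in some countable $b$'' also conflates parameters with quantified variables --- Spoiler chooses new elements during play, on arbitrary coordinates, so no countable set fixed in advance controls the game.

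The missing idea, which the paper's proof supplies, is to work at the level of the forcing rather than inside the quotient structure: one builds an automorphism $\sigma$ of $\mathbb{S}^\kappa\downarrow p$ swapping two coordinates, \emph{fixing} a condition $p\in G$ that forces $\phi(\dot a)$, applies the Symmetry Lemma to get $p\Vdash(\sigma(\dot{\mathcal{R}}),\subseteq)\vDash\phi(\sigma(\dot a))$, and then exhibits an explicit isomorphism (induced by the transposition $\theta$) from the \emph{twisted} structure $\sigma(\dot{\mathcal{R}})$ back onto $\dot{\mathcal{R}}$, which carries $\sigma(\dot a)$ to a new witness of $\phi$ inside the genuine $\mathcal{R}$. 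This two-step transfer is what replaces your ``coordinate-freeness'' and cannot be dispensed with; note also that it only yields a new witness for each fixed formula (the witness depends on the condition $p$ chosen for $\phi$), which is all the theorem needs, rather than equality of complete types. A further, smaller gap: when $\kappa=\omega$ a degree may have co-finite support (e.g.\ the one corresponding to $\omega\setminus\{0\}\in\mathcal{R}$), so no disjoint relocation $a'$ exists; one must instead swap finitely many coordinates with overlapping supports, which your construction as stated does not cover.
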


In Section~\ref{sec:pre}, we briefly discuss some basic definitions regarding Sacks forcing and its products. In Section~\ref{sec:representation}, we prove a representation theorem for the join-semilattice $(\mathcal{D}_c, \le_c)$ in $\mathrm{L}[G]$ (Theorem~\ref{thm:representation}). This representation is key for the proofs of Theorems~\ref{thm:main1}-\ref{thm:main3}, which are presented in Section~\ref{sec:mainproofs}. Finally, in Section~\ref{sec:last}, we prove the following result in $\mathsf{ZF}$ showing that, in some sense, we cannot improve our representation theorem. 

\begin{theorem}\label{thm:main4}
$(\mathcal{P}(\omega), \subseteq)$ is not isomorphic to any ideal of $(\mathcal{D}_c, \le_c)$. 
\end{theorem}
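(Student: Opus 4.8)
The plan is to reduce the problem to a single constructibility degree and then to contradict the completeness that $(\mathcal{P}(\omega),\subseteq)$ would impose. First I would observe that any ideal $I$ isomorphic to $(\mathcal{P}(\omega),\subseteq)$ must have a greatest element, since $\mathcal{P}(\omega)$ has the top element $\omega$ and isomorphisms preserve it. If $[x]_c$ denotes this greatest element, then, being downward closed and containing its own maximum, $I$ is forced to be the principal ideal $\mathcal{D}_c^{\le x} := \{[y]_c : y \le_c x\}$. Thus it suffices to show that no principal ideal $\mathcal{D}_c^{\le x}$ is isomorphic to $(\mathcal{P}(\omega),\subseteq)$.

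Next I would exploit absoluteness. The relation $\le_c$ and the set $\mathbb{R}^{\mathrm{L}[x]}$ of reals constructible from $x$ are absolute, so the poset $\mathcal{D}_c^{\le x}$ is literally the same object whether computed in $V$ or in $\mathrm{L}[x]$; in particular it is an element of $\mathrm{L}[x]$, where $\mathsf{CH}$ holds. An isomorphism with $\mathcal{P}(\omega)$ would make $\mathcal{D}_c^{\le x}$ a complete lattice in $V$; since completeness passes down to the inner model $\mathrm{L}[x]$ (it has fewer subsets to bound), and since a routine cardinal computation (comparing $2^{\aleph_0}$ with $|\mathbb{R}^{\mathrm{L}[x]}| = \aleph_1^{\mathrm{L}[x]}$) forces $\mathsf{CH}$ in $V$ together with $\aleph_1^{\mathrm{L}[x]} = \aleph_1^V$, the set of atoms stays countable in $\mathrm{L}[x]$ and atomicity together with the Boolean operations transfer to $\mathrm{L}[x]$. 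By Tarski's characterization of complete atomic Boolean algebras, this yields $\mathrm{L}[x] \models \mathcal{D}_c \cong \mathcal{P}(\omega)$. Hence I may assume outright that $V = \mathrm{L}[x]$ and that $(\mathcal{D}_c,\le_c)$ is a complete atomic Boolean algebra with countably many atoms $[y_n]_c$ ($n\in\omega$) and greatest element $[x]_c$.

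With this picture in place, the contradiction should come from the completeness of $\mathcal{P}(\omega)$: in $\mathcal{P}(\omega)$ the only upper bound of the set of all atoms is the top, so in $\mathcal{D}_c$ the only degree bounding every $[y_n]_c$ must be $[x]_c$. Since $V = \mathrm{L}[x]$, the sequence $\langle y_n : n\in\omega\rangle$ is a real, automatically $\le_c x$, and being an upper bound of every atom it satisfies $x \equiv_c \langle y_n\rangle$. The strategy is then to manufacture a \emph{second} upper bound $z \le_c x$ of all the atoms with $z \not\equiv_c x$: such a $z$ lies in $\mathcal{D}_c^{\le x}$, bounds every atom, yet is not the top, which is impossible in $\mathcal{P}(\omega)$.

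The natural candidate is a real $z$ coding the unordered \emph{set} $\{y_n : n\in\omega\}$ rather than the enumerated sequence: then each $y_n$ belongs to $\mathrm{L}[z]$, so $z$ bounds every atom, while $z$ is strictly below $x$ precisely when the labeling $n\mapsto y_n$ is not recoverable from the set. I expect this last point—showing that the set of (suitably chosen) representatives of infinitely many pairwise incomparable minimal degrees does not reconstruct their enumeration, equivalently that infinitely many minimal constructibility degrees can never possess a least upper bound—to be the main obstacle. It is exactly an ``anti-$\sigma$-completeness'' phenomenon for minimal degrees, and I would establish it by a fusion/independence argument that uses the minimality and mutual incomparability of the $[y_n]_c$ to choose representatives whose unordered collection omits the synchronizing information carried by $x$; the assumed rigidity of the full $\mathcal{P}(\omega)$-structure is what makes even one such extra upper bound fatal.
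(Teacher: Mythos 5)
Your reduction to the case of a principal ideal, and then (via absoluteness, the cardinality computation, and Tarski's theorem) to the situation ``$\mathrm{V}=\mathrm{L}[x]$ and $(\mathcal{D}_c,\le_c)$ is a complete atomic Boolean algebra with countably many atoms $[y_n]_c$ and top $[x]_c$'', is essentially sound, but it only restates the problem: all of the actual content of the theorem still has to be extracted from that configuration, and the mechanism you propose for doing so does not work. You want to produce a second upper bound of the set of atoms by passing to a real $z$ coding the \emph{unordered} set $\{y_n \mid n\in\omega\}$, and you rest this on the claim that such a set need not reconstruct its enumeration, ``equivalently that infinitely many minimal constructibility degrees can never possess a least upper bound.'' That claim is false in general --- indeed it fails in the very model this paper studies: for $\kappa=\omega$ the countably many atoms $\mathbf{s}_n$ of $\mathcal{D}_c$ do have a least upper bound (the top degree), and by Proposition~\ref{prop:definA} the unordered set $S$ of Sacks reals computes its own enumeration $e_S$. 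So there is no ZF lemma of the shape you need, and since Theorem~\ref{thm:main4} is a ZF statement about arbitrary atoms in an arbitrary model, the ``fusion/independence argument'' you invoke is not available either: there is no forcing in sight, and the $y_n$ carry no generic structure to fuse over. (A further, smaller issue: there is in general no canonical real coding an unordered countable set of reals --- producing one already requires choosing an enumeration.)

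The gap is precisely the step ``derive a contradiction from the complete atomic picture,'' and the paper closes it by a different device. Shift the embedding so that one atom is discarded: set $f(u)=g(\{n+1\mid n\in u\})$, so that $g(\{0\})\not\le_c f(\omega)$. Because the range is an ideal and $\le_c$ is absolute, the \emph{set} $\{f(\{n\})\mid n\in\omega\}$ equals the set of atoms of $\mathcal{D}_c$ below $f(\omega)$ and is therefore itself constructible from $f(\omega)$. Lemma~\ref{prop:neg} then shows this is already fatal: working in $\mathrm{L}[f(\omega)]$ one fixes an injection $g\colon\omega\to\{f(\{n\})\mid n\in\omega\}$, and any real $r$ of $\mathrm{V}$ is decoded from the single degree $f(\{n\mid \exists m\in r\ (f(\{n\})=g(m))\})\le_c f(\omega)$ by asking which atoms lie below it; hence every real of $\mathrm{V}$ lies in $\mathrm{L}[f(\omega)]$, contradicting $g(\{0\})\not\le_c f(\omega)$. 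In other words, the contradiction comes not from exhibiting a spurious extra upper bound of the atoms, but from the fact that a co-atom of a $\mathcal{P}(\omega)$-like ideal would have to construct every real while provably missing one. You would need to replace your final paragraph with an argument of this kind; as written, the proof is incomplete at its decisive step.
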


\section{Preliminaries}\label{sec:pre}

Our notation is standard---see e.g.~\cite{MR1940513}. Following set-theoretic practice, we identify $\mathcal{P}(\omega)$ with the Cantor space ${}^\omega 2$, and refer to their members as ``reals". A \emph{tree} $T$ is a nonempty set of finite sequences closed under initial segments, and we denote by $[T]$ the set of its infinite branches. Given two sets $x,y$, we denote by ${}^x y$ the set of all functions from $x$ to $y$.

\subsection{Constructibility real degrees}\label{sec:pre:realdegrees}

This subsection is taken for the most part from \cite[\S 2.2]{Sierpinski}. The \emph{constructibility preorder} $ \le_c $ on the universe of sets is defined by 
$$ 
 x \le_c y \iff x \in \mathrm{L} (y).
$$
Let $ \equiv_c $ be the induced equivalence relation. We are interested mainly in the restriction of the contructibility preorder $\le_c$ on $\mathcal{P}(\omega)$. The quotient $ \mathcal{P} ( \omega ) / {\equiv_c} $ with the induced order is the set of \emph{constructibility real degrees}, and it is denoted by $ \mathcal{D}_c $. From this point onward, we will refer to the constructibility real degrees simply as \emph{real degrees}. Given a real $a$, we will denote by its bold character $\mathbf{a}$ its equivalence class $[a]_c$.
If $ \mathbf{d} \in \mathcal{D}_c $ then set $ \mathrm{L} [ \mathbf{d} ] = \mathrm{L} [ a ] $ for some/any $ a \in \mathbf{d} $.
Note that $ \mathcal{D}_c $ is a partial order with minimum $ \mathbf{0} = \mathcal{P}(\omega)\cap \mathrm{L} $.  

The poset $(\mathcal{D}_c, \le_c)$ is actually a join-semilattice with $ \mathbf{c} \vee \mathbf{d} =  c \oplus d  $ for some/any $ c \in \mathbf{c} $ and $ d \in \mathbf{d} $, where 
$$ 
c \oplus d = \{ 2 n \mid n \in c \} \cup \{ 2 n + 1 \mid n \in d \} . 
$$
More generally, if $ \langle \cdot , \cdot \rangle \colon \omega \times \omega \to \omega$ is a recursive bijection, and $ a_n \subseteq \omega $, then letting
\[
\textstyle\bigoplus_{ n \in \omega } a_n \coloneqq \{ \langle n , k \rangle \mid n \in \omega \text{ and } k \in a_n \}
\]
we have that $ a_i \le_c \bigoplus_{ n \in \omega } a_n $ for all \( i < \omega \). Thus, $\mathcal{D}_c$ is \( \sigma \)-directed---i.e., any countable set of real degrees has an upper bound.

On the other hand, a countable set of real degrees need not have a least upper bound, i.e., $ \mathcal{D}_c $ need not be a $\sigma$-complete~\cite{MR520202}. Again, in~\cite{MR520202}, it is also shown that $\mathcal{D}_c$ needs not be a meet-semilattice.

The structure of $ \mathcal{D}_c $ is highly non-absolute. If $ \mathrm{V} = \mathrm{L} $ then $ \mathcal{D}_c $ is the singleton $ \{ \mathbf{0} \} $, while if $ \mathrm{V} = \mathrm{L} [ c ] $ where $ c $ is a Cohen real, then $ \mathcal{D}_c $ has the size of the continuum and a rich structure~\cite{MR850218}. 

Adamowicz~\cite{MR505487} has shown that for every constructible, constructibly countable, and well-founded join-semilattice with a least element, there is a generic extension of the constructible universe in which $\mathcal{D}_c$ is isomorphic to the given join-semilattice---see~\cites{MR968536, MR1233817} for stronger results and more discussion on this. 

Here are a couple of other interesting results in this regard: as already mentioned in the introduction, in the iterated Sacks model $\mathcal{D}_c$  is well-ordered of order-type $\omega _2$; Groszek~\cite{MR1295981} has shown that $\mathcal{D}_c$ can, consistently with $\mathsf{ZFC}$, be isomorphic to the reverse copy of $\omega_1+1$; see also~\cites{MR1777770, MR1680978} for similar results.

\subsection{Sacks forcing and its products}\label{sec:pre:sacks}

We mostly adhere to the notation used in~\cite{MR2155534}. A tree $T\subseteq {}^{<\omega}2$ is a \emph{perfect binary tree} if every node of $T$ has two incomparable extensions in $T$. The poset of all perfect binary trees ordered by inclusion is known as \emph{Sacks forcing}, and it is denoted by $\mathbb{S}$. Clearly, $\mathbf{1}_\mathbb{S} = {}^{<\omega} 2$. The \emph{stem} of a condition $p \in \mathbb{S}$ is the $\subseteq$-maximal node $t \in p$ such that, for every $s \in p$, either $s \subseteq t$ or $t \subseteq s$.

Given $p \in \mathbb{S}$ and $n\in\omega$ we let $p^n$ be the set of all those $t \in p$ that are $\subseteq$-minimal in $p$ with respect to the property of having exactly $n$ proper initial segments that have two immediate successors in the tree $p$. For $p,q \in \mathbb{S}$ and $n\in\omega$, we let $$p \le_n q \iff p \le q \text{ and } p^n = q^n.$$

If $p \in \mathbb{S}$ and $n \in \omega$, there is a natural way of assigning to each finite binary sequence $\sigma \in {}^n 2$ an element $p(\sigma)$ of $p^n$. Let $p \ast \sigma \coloneqq \{s \in p \mid s \subseteq p(\sigma) \vee p(\sigma) \subseteq s\}$.

Given a cardinal $\kappa$, we denote by $\mathbb{S}^\kappa$ the countable-support product of $\kappa$-many Sacks forcing. A condition $p$ of $\mathbb{S}^\kappa$ is a map from $\kappa$ to $\mathbb{S}$ such that $\text{supp}(p) \coloneqq \{\alpha \in \kappa \mid p(\alpha) \neq \mathbf{1}_{\mathbb{S}}\}$, known as the \emph{support} of $p$, is countable. For any subset $D\subseteq \kappa$, we let $\mathbb{S}^\kappa \upharpoonright D$ denote the complete subforcing of $\mathbb{S}^\kappa$ defined as the set of all the conditions of $\mathbb{S}^\kappa$ whose support is included in $D$. Note that $\mathbb{S}^\kappa \upharpoonright D$ is isomorphic to $\mathbb{S}^{|D|}$. Given a condition $p \in \mathbb{S}^\kappa$, we abuse the notation and denote by $p \upharpoonright D$ the condition of $\mathbb{S}^\kappa \upharpoonright D$ defined in the expected way: $(p \upharpoonright D)(\alpha) = p(\alpha)$ if $\alpha \in D$, and $(p \upharpoonright D)(\alpha) = \mathbf{1}_\mathbb{S}$ otherwise.

Given $p, q \in \mathbb{S}^\kappa$, some finite $F\subseteq \kappa$ and some $n\in\omega$, we let $$p \le_{F,n} q \iff p \le q \text{ and } \forall \alpha \in F \ \big(p(\alpha) \le_n q(\alpha)\big).$$ A \emph{fusion sequence} is a sequence $(p_n)_{n\in\omega}$ of elements of $\mathbb{S}^\kappa$ such that there exists a $\subseteq$-increasing sequence $(F_n)_{n \in\omega}$ of finite sets with
\begin{enumerate}
\itemsep0.3em
\item $p_{n+1} \le_{F_{n}, n} p_{n}$ for every $n \in\omega $, and
\item $\bigcup_{n \in\omega} F_n = \bigcup_{n\in\omega} \text{supp}(p_n)$.
\end{enumerate}

For every fusion sequence $(p_n)_{n\in\omega}$, we let 
$$
\mathcal{F}((p_n)_{n\in\omega}) \coloneqq \Big( \bigcap_{n\in\omega} p_n(\alpha)\Big)_{\alpha \in \kappa}
$$
be its \emph{fusion}. It is easy to check that a fusion is always an element of $\mathbb{S}^\kappa$.

If $p \in \mathbb{S}^\kappa$, $F \subseteq \kappa$ and $n \in \omega$, and $\sigma \in {}^F({}^n 2)$, let $p \ast \sigma$ be such that for all $\alpha \in F$, $(p\ast \sigma)(\alpha) = p(\alpha) \ast \sigma(\alpha)$ and for all $\alpha \not\in F$, $(p\ast \sigma)(\alpha) = p(\alpha)$.  

If $\kappa = \omega$, we write $p \le_n q$ instead of $p \le_{n,n} q$, and a fusion sequence is simply a sequence $(p_n)_{n\in\omega}$ of elements of $\mathbb{S}^\omega$ such that $p_{n+1} \le_n p_{n}$ for every $n \in\omega$.

The forcing $\mathbb{S}^\kappa$ is \emph{proper} and, in particular, does not collapse $\aleph_1$ \cite{MR2155534}. Moreover, if $\mathsf{CH}$ holds, a simple $\Delta$-system argument shows that $\mathbb{S}^\kappa$ has the $\aleph_2$-cc, and therefore does not collapse any cardinal.

\section{Representing the real degrees in the side-by-side Sacks model}\label{sec:representation}

Let $\kappa$ be an infinite cardinal and fix an $\mathbb{S}^\kappa$-generic filter $G$ over $\mathrm{L}$. Let $\langle s_\alpha \mid \alpha \in \kappa\rangle$ be the generic sequence of Sacks reals added by $G$, i.e., for each $\alpha \in \kappa$,  $s_\alpha$ is the unique element of $\bigcap_{p \in G}[p(\alpha)]$. Let $S \coloneqq \{s_\alpha \mid \alpha \in \kappa\}$ be the set of these reals. 

In $\mathrm{L}[G]$, define the set $\mathcal{R}$ as follows:
\[
\mathcal{R} \coloneqq \big\{x \in [\kappa]^{\le\omega} \mid \forall \alpha \in \kappa \ (s_\alpha \le_c x \Rightarrow \alpha \in x)\big\}.
\]

This section is devoted to the proof of the following theorem, which is key to proving Theorems~\ref{thm:main1}-\ref{thm:main3}, as it unravels much of the combinatorics of the real degrees in $\mathrm{L}[G]$.

\begin{theorem}\label{thm:representation}
In $\mathrm{L}[G]$, $(\mathcal{D}_c, \le_c) \cong (\mathcal{R}, \subseteq)$.
\end{theorem}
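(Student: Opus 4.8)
The plan is to build an explicit isomorphism between the join-semilattice $(\mathcal{D}_c,\le_c)$ and $(\mathcal{R},\subseteq)$ by sending each real degree to the set of "indices" of generic Sacks reals that it computes. Concretely, for a real $x \in \mathrm{L}[G]$ I would define
\[
\Phi(x) \coloneqq \{\alpha \in \kappa \mid s_\alpha \le_c x\},
\]
and first argue that $\Phi(x)$ actually lands in $\mathcal{R}$. Membership in $[\kappa]^{\le\omega}$ should follow from a \emph{minimality/reading-off} argument: any real $x$ in the extension is added by a countable subproduct (using properness and the countable-support structure), so only countably many $s_\alpha$ can satisfy $s_\alpha \le_c x$. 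The defining closure condition of $\mathcal{R}$ — that $s_\alpha \le_c \Phi(x)$ implies $\alpha \in \Phi(x)$ — should hold because $\Phi(x)$ as a subset of $\kappa$ codes, up to constructibility, exactly the reals $\{s_\alpha : \alpha\in\Phi(x)\}$ together with whatever $\mathrm{L}$ already knows, so if $s_\alpha \le_c \Phi(x)$ then $\alpha$ was already captured.

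First I would verify that $\Phi$ is well-defined on degrees, i.e.\ that $x \equiv_c y$ implies $\Phi(x)=\Phi(y)$; this is immediate from transitivity of $\le_c$. Then I would establish that $\Phi$ is \textbf{order-preserving and order-reflecting}. Order-preservation ($x \le_c y \Rightarrow \Phi(x)\subseteq\Phi(y)$) is again transitivity. The substantive direction is reflection: if $\Phi(x)\subseteq\Phi(y)$ then $x \le_c y$. The natural strategy here is to show that each degree is \emph{determined} by its index set, namely that $x \equiv_c \bigoplus_{\alpha\in\Phi(x)} s_\alpha$ (with the reals indexed in increasing order of $\alpha$, using a constructible enumeration). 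One inequality, $\bigoplus s_\alpha \le_c x$, is clear since each $s_\alpha \le_c x$ for $\alpha\in\Phi(x)$ and the join of countably many reals below $x$ stays below $x$ by $\sigma$-directedness of $\le_c$ within a fixed $\mathrm{L}[x]$. The reverse inequality $x \le_c \bigoplus_{\alpha\in\Phi(x)} s_\alpha$ is the crux.

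The hard part will be precisely this reverse inequality: showing that the generic reals $\{s_\alpha : s_\alpha \le_c x\}$ already \emph{compute} $x$ over $\mathrm{L}$. The plan is to use a \textbf{fusion argument} together with the product structure. Given a name $\dot{x}$ and a condition $p$ forcing a value, I would thin $p$ along a fusion sequence so that the value of $\dot{x}$ is continuously read off from the generic branches through coordinates in a fixed countable set $D \supseteq \mathrm{supp}$ of the relevant part; this is the standard "continuous reading of names" for countable-support products of Sacks forcing. This shows $x \le_c \langle s_\alpha : \alpha \in D\rangle$. Then I would invoke a \emph{genericity/mutual-genericity} argument to prune $D$ down to exactly those coordinates whose generic real is genuinely $\le_c x$: if some $s_\beta$ with $\beta \in D$ satisfies $s_\beta \not\le_c x$, then by the minimality of the Sacks real $s_\beta$ over the rest of the generic (a consequence of $s_\beta$ having minimal constructibility degree over $\mathrm{L}[\langle s_\alpha : \alpha\neq\beta\rangle]$), the coordinate $\beta$ contributes no information to $x$ and can be discarded from the reading. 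Iterating (or fusing simultaneously) over the countably many coordinates in $D\setminus\Phi(x)$ yields $x \le_c \bigoplus_{\alpha\in\Phi(x)} s_\alpha$.

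Finally, to conclude that $\Phi$ is a \textbf{bijection onto} $\mathcal{R}$, hence an isomorphism of partial orders (and automatically of join-semilattices, since an order-isomorphism between join-semilattices preserves joins), I would check \emph{surjectivity}: given any $x \in \mathcal{R}$, the real $\bigoplus_{\alpha\in x} s_\alpha$ has index set exactly $x$. The inclusion $x \subseteq \Phi(\bigoplus_{\alpha\in x}s_\alpha)$ is clear; the reverse inclusion $\Phi(\bigoplus_{\alpha\in x}s_\alpha)\subseteq x$ is exactly the closure condition defining $\mathcal{R}$, which is why $\mathcal{R}$ was defined with that clause. Injectivity follows from the representation $x \equiv_c \bigoplus_{\alpha\in\Phi(x)} s_\alpha$ established above. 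I expect the mutual-genericity/minimality step in the reverse inequality to be the main technical obstacle, requiring a careful fusion construction that simultaneously reads off $\dot x$ and certifies which coordinates are redundant.
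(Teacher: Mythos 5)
Your overall strategy is the same as the paper's: you define the same map $\mathbf{r}\mapsto\{\alpha\mid s_\alpha\le_c \mathbf{r}\}$, reduce everything to the representation $x\equiv_c\{s_\alpha\mid s_\alpha\le_c x\}$, and handle surjectivity via the closure clause defining $\mathcal{R}$. The structure of the argument is right, and you correctly locate the crux in the inequality $x\le_c\bigoplus_{\alpha\in\Phi(x)}s_\alpha$.

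However, the mechanism you propose for that crux has a genuine gap. You want to discard a coordinate $\beta$ with $s_\beta\not\le_c x$ ``by the minimality of the Sacks real $s_\beta$ over the rest of the generic.'' Minimality gives a dichotomy \emph{over} $\mathrm{L}[G\upharpoonright(\kappa\setminus\{\beta\})]$: either $x\in\mathrm{L}[G\upharpoonright(\kappa\setminus\{\beta\})]$, or $s_\beta\in\mathrm{L}[G\upharpoonright(\kappa\setminus\{\beta\})][x]$. The second alternative is \emph{not} the same as $s_\beta\le_c x$, since it allows $s_\beta$ to be recovered from $x$ only with the help of the other generic coordinates as parameters. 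So from $s_\beta\not\le_c x$ you cannot conclude, via minimality alone, that $x\in\mathrm{L}[G\upharpoonright(\kappa\setminus\{\beta\})]$, which is what the pruning requires. Closing exactly this gap is the main technical content of the paper (Proposition~\ref{prop:realconstr}): one shows that if $\dot r$ genuinely depends on coordinate $\beta$ (condition \ref{prop:realconstr-3} there), then by a fusion argument that \emph{mixes} conditions across the $\beta$-th coordinate one can force the decided initial segments $\dot r_{q\ast\sigma_0}$, $\dot r_{q\ast\sigma_1}$ to be incomparable whenever $\sigma_0(\beta)\neq\sigma_1(\beta)$, so that $s_\beta$ can be read off from $r$ alone, with only constructible parameters. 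This ``incomparability coding'' step is not a consequence of minimality and is missing from your plan. A secondary point in the same vein: your final target is $x\in\mathrm{L}(A)$ for $A=\{s_\alpha\mid s_\alpha\le_c x\}$, not merely $x\in\mathrm{L}[G\upharpoonright D']$ for some pruned $D'$; the paper arranges this by a single fusion recording, for each coordinate $n$ and each local state $\sigma$, whether $\dot s_n\le_c\dot r$ is forced (the map $\delta$), and then shows that altering the irrelevant coordinates does not change the decided initial segment of $\dot r$ --- and that the set $A$ itself computes its own indexing $e_A$ (Proposition~\ref{prop:definA}), which your sketch uses implicitly when forming $\bigoplus_{\alpha\in\Phi(x)}s_\alpha$ from $A$.
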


Specifically, we will show in the proof of Theorem~\ref{thm:representation} that in $\mathrm{L}[G]$, the map $\Omega$ which associates to each real degree $\mathbf{r}$ the set $\{\alpha \in \kappa \mid \mathbf{s}_\alpha \le_c \mathbf{r}\}$ is an isomorphism from $(\mathcal{D}_c, \le_c)$ to $(\mathcal{R}, \subseteq)$. The fact that $\Omega$ is an embedding from $(\mathcal{D}_c, \le_c)$ to $[\kappa]^{\le \omega}$ is already known (see e.g. \cite[Theorem 1(2)]{MR1777770}). The point of Theorem~\ref{thm:representation} is to give a non-trivial characterization of the range of $\Omega$. It is precisely this characterization (i.e., the introduction of $\mathcal{R}$) which is crucial for proving Theorems~\ref{thm:main1}-\ref{thm:main3}.

Before carrying on with the proof, let us highlight that the choice of the constructible universe as our ground model is not due to its particular properties, which we do not employ, but instead to the fact that we are interested in studying the \emph{constructibility degrees} of the generic extension. Indeed, our results would also hold if we were to choose a different ground model $\mathrm{V}$, but then we would need to talk about $\mathrm{V}$-degrees rather than constructibility degrees.

Given a constructible $D\subseteq \kappa$, we let $G \upharpoonright D$ be the set of all the conditions in $G$ whose support is contained in $D$. We denote by $\dot{G} \upharpoonright D$ its canonical name. Note that $G \upharpoonright D$ is an $\mathbb{S}^\kappa \upharpoonright D$-generic filter over $\mathrm{L}$.

In order to prove Theorem~\ref{thm:representation}, we first need some  preliminary technical results. The first one tells us that we can often assume $\kappa=\omega$ without loss of generality. This assumption simplifies the construction of fusion sequences, at least notationally. 

A constructible set is said to be \emph{constructibly countable} if it is countable in $\mathrm{L}$.

\begin{lemma}\label{lemma:general} In $\mathrm{L}[G]$, for every $E \in [\kappa]^{\le \omega}$ there exists a constructible, constructibly countable $D\subseteq \kappa$ such that $E\subseteq D$ and $E \in \mathrm{L}[G \upharpoonright D]$. 
\end{lemma}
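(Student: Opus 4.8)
The plan is to capture $E$ inside the restriction of $G$ to the trace on $\kappa$ of a countable elementary submodel of the ground model, exploiting the properness of $\mathbb{S}^\kappa$. Fix in $\mathrm{L}$ a name $\dot E$ and a condition $p \in G$ with $p \Vdash \dot E \in [\check\kappa]^{\le\omega}$; since $p$ forces $\dot E$ countable, fix also a name $\dot f$ for a surjection $\omega \to \dot E$. Working inside $\mathrm{L}$, for a regular $\theta$ with $\mathbb{S}^\kappa, \dot E, \dot f \in H_\theta^{\mathrm{L}}$, I would consider countable $M \prec H_\theta^{\mathrm{L}}$ with $M \in \mathrm{L}$ and $\mathbb{S}^\kappa, \dot E, \dot f, p, \kappa \in M$, and set $D := M \cap \kappa$. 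Then $D \in \mathrm{L}$ is constructibly countable, as required; the whole game is to show that for a suitable such $M$ we get both $E \subseteq D$ and $E \in \mathrm{L}[G \upharpoonright D]$. First I would observe that the set of conditions which are $(M, \mathbb{S}^\kappa)$-generic for some such $M$ is dense below $p$: given $q' \le p$, form in $\mathrm{L}$ a countable $M \prec H_\theta^{\mathrm{L}}$ containing $q'$ and the fixed parameters, and use properness to extend $q'$ to an $(M, \mathbb{S}^\kappa)$-generic $q \le q'$. Since $p \in G$, genericity of $G$ yields some $q \in G$ that is $(M, \mathbb{S}^\kappa)$-generic for a fixed such $M$, and I would work with this $M$ and $D = M \cap \kappa$ throughout.

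For $E \subseteq D$, I would use the standard consequence of $M$-genericity that $q \Vdash \mathrm{ran}(\dot f) \subseteq \check M$: if $r \le q$ forces $\dot f(\check n) = \check\beta$, then the maximal antichain in $M$ deciding $\dot f(\check n)$ is predense below $q$ within $M$, and the value decided by any of its elements of $M$ lies in $M$, forcing $\beta \in M$. Hence $E = \mathrm{ran}(f) \subseteq M \cap \kappa = D$. For $E \in \mathrm{L}[G \upharpoonright D]$, I would combine two facts. The product-specific one: every $p' \in \mathbb{S}^\kappa \cap M$ has $\mathrm{supp}(p') \subseteq M \cap \kappa = D$, since its support is a countable element of $M$ and hence a subset of $M$; thus $\mathbb{S}^\kappa \cap M \subseteq \mathbb{S}^\kappa \upharpoonright D$ and therefore $G \cap M = M \cap (G \upharpoonright D) \in \mathrm{L}[G \upharpoonright D]$. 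The factorization fact: since $q$ is $(M, \mathbb{S}^\kappa)$-generic, for each $\alpha \in M \cap \kappa$ the truth value of $\alpha \in E$ is decided by an element of $G \cap M$, because the maximal antichain deciding $\check\alpha \in \dot E$ lies in $M$ and is met by $G$ inside $M$. As $E \subseteq M \cap \kappa$ by the previous step, this shows $E = \{\alpha \in D : \exists r \in G \cap M,\ r \Vdash \check\alpha \in \dot E\}$ is computable in $\mathrm{L}[G \cap M] \subseteq \mathrm{L}[G \upharpoonright D]$.

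The routine ingredients here are the two standard properness lemmas (values of names in $M$ land in $M$, and names in $M$ are evaluated by $G \cap M$), which hold verbatim relativized to $\mathrm{L}$ because $\mathbb{S}^\kappa$ is provably proper. I expect the main point to be the combination in the last step: one must pair the factorization through $G \cap M$ with the product-specific observation that conditions in $M$ are supported in $D$, so that $G \cap M$ — and hence $E$ — is genuinely recoverable from $G \upharpoonright D$ alone, rather than from all of $G$. An alternative, more hands-on route avoiding elementary submodels would build an explicit fusion sequence below $p$ along an increasing sequence of finite subsets of a constructible countable $D$, arranging that the fusion forces $\dot E$ to be read off $\dot G \upharpoonright \check D$; the density of such fusions, captured by $G$, would replace the density-of-$M$-generics step, but the bookkeeping is essentially the properness argument in disguise.
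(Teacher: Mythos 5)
Your proof is correct, but it takes a genuinely different route from the paper's. The paper proves the lemma by hand: working in $\mathrm{L}$, it fixes a name $\dot f$ for an enumeration of $\dot E$ and runs an explicit bookkeeping/fusion construction, building a fusion sequence $(p_n,F_n)$ so that each $p_{n+1}\ast\sigma$ (for $\sigma\in{}^{F_n}({}^n2)$) decides $\dot f(n)$ and the decided ordinals are fed back into $F_{n+1}$; the fusion $q$ then has support $D$ equal to a constructible countable set containing all possible values, and $q$ forces $\dot E\subseteq D$ and $\dot E\in\mathrm{L}[\dot G\upharpoonright D]$. You instead black-box the fusion inside properness: take a countable $M\prec H_\theta^{\mathrm{L}}$ in $\mathrm{L}$, set $D=M\cap\kappa$, find an $(M,\mathbb{S}^\kappa)$-generic $q\in G$ by density, and use the two standard master-condition facts (ordinal values of names in $M$ land in $M$; antichains in $M$ are met by $G$ inside $M$) together with the product-specific observation that conditions in $M$ are supported in $M\cap\kappa$, so that $G\cap M=M\cap(G\upharpoonright D)$ is recoverable from $G\upharpoonright D$. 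That last observation is exactly the extra ingredient your route needs and you supply it correctly; without it, factoring through $G\cap M$ would not yield membership in $\mathrm{L}[G\upharpoonright D]$. The trade-off is the expected one: your argument is shorter modulo citing properness and transfers to other proper product-like forcings with the same support behaviour, whereas the paper's explicit fusion is self-contained and rehearses the exact technique (bookkeeping over ${}^{F_n}({}^n2)$, closing $D$ under decided values) that is reused verbatim in Lemma~\ref{lemma:proper} and in the constructions of Propositions~\ref{prop:realconstr} and~\ref{prop:main}. You even flag the fusion alternative at the end; that alternative is precisely what the paper does.
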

\begin{proof}
We work in $\mathrm{L}$. Fix some $p \in \mathbb{S}^\kappa$, some $\mathbb{S}^\kappa$-name $\dot{E}$ for $E$ and a $\mathbb{S}^\kappa$-name $\dot{f}$ such that 
\[
p \Vdash \dot{f}:\omega\rightarrow \kappa \text{ with } \mathrm{ran}(\dot{f}) = \dot{E}.
\]

Via a simple bookkeeping argument, we can inductively define a sequence $(p_n, F_n)_{n\in\omega}$ and a family of ordinals $\langle \alpha_\sigma \mid \sigma \in {}^{F_n}({}^n 2) \text{ for some }n \in\omega\rangle$ such that 
\begin{enumerate}[label={\roman*.}]
\itemsep0.3em
\item $(p_n)_{n\in\omega}$ is a fusion sequence witnessed by $(F_n)_{n \in\omega}$,
\item $p_0 = p$,
\item for all $n \in\omega$ and for all $\sigma \in {}^{F_n}({}^n 2)$, $p_{n+1} \ast \sigma \Vdash \dot{f}(n) = \alpha_\sigma$,
\item for all $n \in\omega $ and for all $\sigma \in {}^{F_n}({}^n 2)$, $\alpha_\sigma \in F_{n+1}$.
\end{enumerate}

Let $q$ be the fusion of the $p_n$s and let $D$ be its support. Then it follows from our construction that $q$ forces $\dot{E}\subseteq D \text{ and }\dot{E} \in \mathrm{L}[\dot{G} \upharpoonright D]$. By density, we are done.
\end{proof}
Note that for every infinite countable $D\subseteq \kappa$, $\mathbb{S}^\kappa \upharpoonright D \cong \mathbb{S}^\omega$. In particular, Lemma~\ref{lemma:general} implies that every real added by $\mathbb{S}^\kappa$ belongs to some $\mathbb{S}^\omega$-generic extension.

The next proposition tells us that any countable subset of $S$ can construct its own enumeration induced by the generic filter $G$. %In particular, we obtain that $\mathrm{L}[G]$ coincides with $\mathrm{L}(S)$.

\begin{proposition}\label{prop:definA}
In $\mathrm{L}[G]$, for every $A \in [S]^{\le\omega}$, if we let $e_A:A\rightarrow \kappa$ be defined by $e(s_\alpha) = \alpha$ for every $s_\alpha \in A$, then:
\begin{enumerate}[label={\upshape \arabic*)}]
\itemsep0.3em
\item\label{prop:definA-1} $e_A \le_c A$.
\item\label{prop:definA-2} $\mathrm{L}(A) \vDash ``A$ is countable''.
\end{enumerate}
\end{proposition}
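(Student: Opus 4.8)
The plan is to recover, for each $x \in A$, its index $e_A(x)$ by reading off in which member of a suitable constructible family of pairwise disjoint basic clopen sets the real $x$ lies. Let me stress at the outset why such an auxiliary family is unavoidable: no canonical, $\mathrm{L}$-definable rule can assign indices to the $s_\alpha$, since the coordinate automorphisms of $\mathbb{S}^\kappa$ fix $S$ setwise while permuting the $s_\alpha$. Hence the definition of $e_A$ inside $\mathrm{L}(A)$ must rely on a constructible parameter whose particular value depends on $G$, and this parameter will be extracted from the generic filter itself.

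I would first reduce to countably many coordinates. Let $E = \{\alpha \in \kappa : s_\alpha \in A\}$ be the range of $e_A$. By Lemma~\ref{lemma:general} I may fix a constructible, constructibly countable $D \subseteq \kappa$ with $E \subseteq D$. Working with the generic $G \upharpoonright D$ for $\mathbb{S}^\kappa \upharpoonright D$, I would then show that the set of conditions $r \in \mathbb{S}^\kappa \upharpoonright D$ having support exactly $D$ and whose stems $\langle \mathrm{stem}(r(\alpha)) : \alpha \in D\rangle$ are pairwise $\subseteq$-incomparable is dense. Granting this, genericity produces some $r \in G \upharpoonright D$ of this form; set $u_\alpha = \mathrm{stem}(r(\alpha))$ for $\alpha \in D$. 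Since forcing conditions lie in the ground model, $\langle u_\alpha : \alpha \in D\rangle$ is constructible, and since $r \in G$ we have $s_\alpha \in [r(\alpha)]$, hence $u_\alpha \subseteq s_\alpha$, for every $\alpha \in D$.

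With this family in hand the reconstruction is immediate. Given $x \in A$, write $x = s_\beta$ with $\beta \in E \subseteq D$, so $u_\beta \subseteq x$; and for any other $\alpha \in D$, incomparability of $u_\alpha$ and $u_\beta$ together with $u_\beta \subseteq x$ forces $u_\alpha \not\subseteq x$ (otherwise $u_\alpha$ and $u_\beta$ would both be initial segments of $x$, hence comparable). Thus $\beta$ is the unique $\alpha \in D$ with $u_\alpha \subseteq x$, so
\[
e_A = \big\{ (x,\alpha) \in A \times D : u_\alpha \subseteq x \big\}.
\]
As $A \in \mathrm{L}(A)$ and $D, \langle u_\alpha : \alpha \in D\rangle \in \mathrm{L} \subseteq \mathrm{L}(A)$, this set lies in $\mathrm{L}(A)$, giving~\ref{prop:definA-1}. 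For~\ref{prop:definA-2}, note that $e_A$ is an injection of $A$ into $D$ belonging to $\mathrm{L}(A)$; composing it with a constructible bijection between the constructibly countable $D$ and $\omega$ (which lies in $\mathrm{L} \subseteq \mathrm{L}(A)$) yields an injection $A \to \omega$ inside $\mathrm{L}(A)$, whence $\mathrm{L}(A) \vDash$ ``$A$ is countable''.

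The step I expect to be the main obstacle is the density claim, namely that below any condition one can thin the (countably many) coordinate trees so that their stems become pairwise incomparable. This is a purely combinatorial statement about countably many perfect trees and is the technical heart of the matter: the naive coordinate-by-coordinate choice can fail, because after committing to stems $u_{\alpha_0},\dots,u_{\alpha_{k-1}}$ it may happen that every branch of the next tree extends one of them, leaving no incomparable stem available. I would handle this by a bookkeeping construction, in the spirit of the fusion arguments already set up in Section~\ref{sec:pre:sacks}, that keeps the committed stems long and ``spread out'': whenever a later tree threatens to be trapped below some $u_{\alpha_i}$, one lengthens $u_{\alpha_i}$ along its own (perfect) tree so as to split off a disjoint cone meeting the new tree. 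Iterating and taking the fusion then yields a single condition $r \le p$ with support $D$ and pairwise incomparable stems, which is exactly what the reduction above requires.
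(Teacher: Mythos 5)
Your overall strategy---recovering $e_A$ from a constructible separating family extracted from the generic filter---is the same as the paper's, and both the reduction to a constructibly countable $D$ via Lemma~\ref{lemma:general} and the derivation of \ref{prop:definA-2} from \ref{prop:definA-1} are fine. However, the density claim that you correctly single out as the technical heart is \emph{false}, and no amount of bookkeeping or lengthening can rescue it. Consider the condition $p$ with support contained in $D=\{\alpha_n \mid n\in\omega\}$ given by letting $p(\alpha_n)$ be the full cone $\{s\in{}^{<\omega}2 \mid s\subseteq t_n \vee t_n\subseteq s\}$, where $(t_n)_{n\in\omega}$ enumerates ${}^{<\omega}2$. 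For every $r\le p$ and every $n$ one has $\mathrm{stem}(r(\alpha_n))\supseteq t_n$. Now $\mathrm{stem}(r(\alpha_{n_0}))$, where $t_{n_0}=\emptyset$, is itself equal to $t_m$ for some $m$; then $\mathrm{stem}(r(\alpha_m))\supseteq t_m=\mathrm{stem}(r(\alpha_{n_0}))$, so these two stems are comparable (and if $m=n_0$, the stem is $\emptyset$, which is comparable with all the others). Your proposed fix of lengthening a committed stem cannot escape this: the lengthened stem is again some $t_{m'}$, and coordinate $\alpha_{m'}$ is trapped above it. So the set of conditions with pairwise incomparable stems is not dense below $p$, and genericity does not supply the family $\langle u_\alpha \mid \alpha\in D\rangle$ your reconstruction needs.

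The repair is to demand less of the separating condition: it suffices to find $r\in G\upharpoonright D$ with $[r(\alpha)]\cap[r(\beta)]=\emptyset$ for all distinct $\alpha,\beta\in D$, since then $e_A(x)$ is still the unique $\alpha\in D$ with $x\in[r(\alpha)]$, and this is absolute in the parameters $r\in\mathrm{L}$ and $A$. Unlike incomparability of stems, pairwise disjointness of the branch sets \emph{is} a dense requirement: one builds a fusion sequence $(p_n)_{n\in\omega}$ with $p_{n+1}\le_n p_n$ arranging $[p_{n+1}(k)]\cap[p_{n+1}(m)]=\emptyset$ for all distinct $k,m<n$, separating the single new coordinate from the finitely many old ones piecewise above the $n$-th splitting level; disjointness, once achieved, persists under further shrinking, so the fusion works. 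This is exactly the route the paper takes.
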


\begin{proof}
By Lemma~\ref{lemma:general}, there exists a constructible, constructibly countable $D\subseteq \kappa$ such that $\mathrm{ran}(e_A) \subseteq D$ and  $\mathrm{ran}(e_A) \in \mathrm{L}[G \upharpoonright D]$. Since $D$ is constructibly countable, \ref{prop:definA-2} directly follows once we prove \ref{prop:definA-1}. Moreover, as both $A$ and $e_A$ belong to $\mathrm{L}[G \upharpoonright D]$, we can suppose without loss of generality $\kappa = \omega$ (see the remark after Lemma~\ref{lemma:general}).

We now show that there must exist a $q \in G$ such that $[q(n)] \cap [q(m)] = \emptyset$ for every distinct $n,m\in\omega$. We work in $\mathrm{L}$. Fix any $p \in \mathbb{S}^\omega$. It is routine to inductively define a fusion sequence $(p_n)_{n\in\omega}$ below $p$ such that for every $n\in\omega$, for every distinct $k, m < n$, $[p_{n+1}(k)] \cap [p_{n+1}(m)] = \emptyset$. Let $q$ be its fusion so that $q$ extends $p$ and satisfies the wanted property. By density, we can find such a $q$ in $G$.

By the properties of the condition $q$ and the fact that $q \in G$, we have that, in $\mathrm{L}[G]$,  $e_A(s)$ is the unique $n \in\omega$ such that $s \in [q(n)]$, for every $s \in A$. Since this definition is absolute modulo the parameters $A$ and $q$, and since $q$ is constructible, we conclude that $e_A$ is constructible relative to $A$, i.e., $e_A \le_c A$.
\end{proof}
\begin{corollary}\label{cor:realset}
In $\mathrm{L}[G]$, for every $A \in [S]^{\le\omega}$, there is a real $r$ such that $r \equiv_c A$.
\end{corollary}
\begin{proof}
If $A$ is finite, then the claim is trivial. So we can assume that $A$ is infinite. By Proposition \ref{prop:definA}, the set $A$ is countable in $\mathrm{L}(A)$. Thus, we can fix a surjection $\psi: \omega\rightarrow A$ in $\mathrm{L}(A)$. Let $r \coloneqq \bigoplus_{k\in\omega} \psi(k)$. Clearly, $A \le_c r$. Furthermore, since $\psi$ belongs to $\mathrm{L}(A)$, we also have $r \le_c A$.
\end{proof}

Given a $\mathbb{S}^\kappa$-name $\dot{r}$ for a real, and some condition $p \in \mathbb{S}^\kappa$, we let $\dot{r}_p$ be the longest initial segment of $\dot{r}$ decided by $p$. Note that if $p$ does not force $\dot{r}$ to belong to the ground model, then $\dot{r}_p$ is a finite sequence. For each $\alpha \in \kappa$, $\dot{s}_\alpha$ is the canonical $\mathbb{S}^\kappa$-name for the $\alpha$-th generic Sacks real.

\begin{proposition}\label{prop:realconstr}
Let $\alpha \in \kappa$ and $p\in\mathbb{S}^\kappa$ and let $\dot{r}$ be a $\mathbb{S}^\kappa$-name for a real. The following are equivalent:
\begin{enumerate}[label={\upshape \arabic*)}]
\itemsep0.3em
\item\label{prop:realconstr-1}$p \Vdash \dot{s}_\alpha \le_c \dot{r}$.
\item\label{prop:realconstr-2} $p \Vdash \dot{r} \not\in \mathrm{L}\big[\dot{G} \upharpoonright (\kappa\setminus\{\alpha\})\big]$.
\item\label{prop:realconstr-3} For every $q \le p$ there exist $q_0,q_1 \le q$ with $q_0 \upharpoonright (\kappa \setminus \{\alpha\}) = q_1 \upharpoonright (\kappa \setminus \{\alpha\})$ such that $\dot{r}_{q_0}$ and $\dot{r}_{q_1}$ are incomparable.
\end{enumerate}
\end{proposition}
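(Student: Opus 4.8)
The plan is to record the factoring of the forcing and then prove the cycle $\ref{prop:realconstr-1}\Rightarrow\ref{prop:realconstr-2}\Rightarrow\ref{prop:realconstr-3}\Rightarrow\ref{prop:realconstr-1}$. Write $\mathbb{Q}=\mathbb{S}^\kappa\upharpoonright(\kappa\setminus\{\alpha\})$ and $\dot H=\dot G\upharpoonright(\kappa\setminus\{\alpha\})$. The structural fact I would use throughout is that $\mathbb{S}^\kappa$ is isomorphic to the product $\mathbb{S}\times\mathbb{Q}$, with the first factor the Sacks forcing at coordinate $\alpha$; hence $\mathrm{L}[G]=\mathrm{L}[s_\alpha][H]$ with $s_\alpha$ Sacks-generic over $\mathrm{L}[H]$, and in particular $s_\alpha\notin\mathrm{L}[H]$. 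I would also repeatedly use the monotonicity $q'\le q\Rightarrow\dot r_{q'}\supseteq\dot r_q$ of the decided-segment operation. The implication $\ref{prop:realconstr-1}\Rightarrow\ref{prop:realconstr-2}$ is then immediate: if some $q\le p$ forced $\dot r\in\mathrm{L}[\dot H]$, then together with $p\Vdash\dot s_\alpha\le_c\dot r$ it would force $s_\alpha\in\mathrm{L}[r]\subseteq\mathrm{L}[H]$, contradicting $s_\alpha\notin\mathrm{L}[H]$.

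For $\ref{prop:realconstr-2}\Rightarrow\ref{prop:realconstr-3}$ I would argue contrapositively, showing $\neg\ref{prop:realconstr-3}\Rightarrow\neg\ref{prop:realconstr-2}$. Fix $q\le p$ witnessing $\neg\ref{prop:realconstr-3}$, so that any two extensions of $q$ sharing the same $\mathbb{Q}$-part decide comparable initial segments of $\dot r$. I would then show $q\Vdash\dot r\in\mathrm{L}[\dot H]$ by reconstructing $r$ inside $\mathrm{L}[H]$: for each $n$, let $r'(n)$ be the value forced on $\dot r(n)$ by some condition $(a,b)$ with $b\in H$, $b\le q\upharpoonright(\kappa\setminus\{\alpha\})$ and $a\le q(\alpha)$. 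Existence of such a witness for each $n$ follows from genericity (the true generic condition decides $\dot r(n)$), and well-definedness follows from the hypothesis: given two witnesses, pass to a common lower bound $b''\in H$ of their $\mathbb{Q}$-parts and apply the hypothesis to the two resulting conditions, which share the $\mathbb{Q}$-part $b''$ and hence decide comparable—thus agreeing—segments. Since the definition of $r'$ uses only $H$, the name $\dot r$, and the condition $q$, we get $r=r'\in\mathrm{L}[H]$, i.e.\ $\neg\ref{prop:realconstr-2}$.

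The heart of the proof is $\ref{prop:realconstr-3}\Rightarrow\ref{prop:realconstr-1}$, which I would establish by a fusion argument showing that the set of $q\le p$ forcing $\dot s_\alpha\le_c\dot r$ is dense below $p$; as condition~\ref{prop:realconstr-3} is inherited by every $q\le p$, it suffices to produce, below any given condition, a $q^*$ together with a \emph{constructible} continuous map $\Phi$ such that $q^*\Vdash\Phi(\dot r)=\dot s_\alpha$. By Lemma~\ref{lemma:general} I may assume $\kappa=\omega$. I would build $q^*$ as the fusion of a sequence $(q_n)$, using condition~\ref{prop:realconstr-3} cofinally to drive the splitting of the $\alpha$-coordinate: whenever a new splitting node $t$ of $q^*(\alpha)$ is created, I apply~\ref{prop:realconstr-3} to get two extensions agreeing off $\alpha$ whose decided segments $u_{t0}\perp u_{t1}$ are incomparable, assign the two sides of the split to these extensions, and fold the common $\mathbb{Q}$-part of the witness into $q_{n+1}$. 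By monotonicity, once this witness is folded in, the final condition $q^*$ restricted to ``$s_\alpha$ passes through $t^{\frown}i$'' still forces $\dot r\supseteq u_{ti}$; hence in the extension exactly one of $u_{t0},u_{t1}$ is an initial segment of $r$, namely the one on the side taken by $s_\alpha$. The map $\Phi$ then reads the branch $s_\alpha$ through $q^*(\alpha)$ off $r$ by following, at each splitting node, the unique consistent side, yielding $s_\alpha\in\mathrm{L}[r]$.

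The main obstacle is precisely the compatibility of these applications of~\ref{prop:realconstr-3} with the fusion constraints $q_{n+1}\le_n q_n$: condition~\ref{prop:realconstr-3} returns an \emph{arbitrary} witnessing refinement of the non-$\alpha$ coordinates, and folding it into $q_{n+1}$ is legitimate only if it does not disturb the already-frozen finite levels of the coordinates in $F_n$. I expect the crux to be a refinement lemma showing that the witness of~\ref{prop:realconstr-3} can always be taken to refine the non-$\alpha$ coordinates only \emph{deep} below their current splitting levels, so that the coded distinction in $r$ is witnessed at positions where the remaining generic reals are already determined by $q^*$. This is what makes the coding of $s_\alpha$ recoverable from $r$ alone, and it is the only place demanding delicate bookkeeping; the remaining fusion steps—ensuring perfectness of each coordinate and $\bigcup_n F_n=\bigcup_n\operatorname{supp}(q_n)$—are standard.
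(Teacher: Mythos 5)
Your cycle of implications matches the paper's, and your arguments for \ref{prop:realconstr-1}$\Rightarrow$\ref{prop:realconstr-2} (mutual genericity) and \ref{prop:realconstr-2}$\Rightarrow$\ref{prop:realconstr-3} (reconstructing $r$ in $\mathrm{L}[H]$ from the $\mathbb{Q}$-parts of deciding conditions) are essentially the ones in the paper and are fine. The problem is \ref{prop:realconstr-3}$\Rightarrow$\ref{prop:realconstr-1}: you correctly identify that the whole difficulty is making the applications of \ref{prop:realconstr-3} compatible with the fusion constraints, but then you only \emph{conjecture} the lemma that would resolve it (``I expect the crux to be a refinement lemma\dots''). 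That lemma is not a routine bookkeeping step; it is the bulk of the paper's proof (Claim~\ref{claim:1}), and without it your argument does not close.

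Moreover, your diagnosis of what the missing lemma should say is somewhat off target. The obstruction is not that the witness of \ref{prop:realconstr-3} might refine the non-$\alpha$ coordinates at too shallow a depth. It is that \ref{prop:realconstr-3} hands you a pair $q_0,q_1$ below a \emph{single} branch $q\ast\sigma_0$, differing only at coordinate $\alpha$, whereas the fusion needs the incomparability to be realized between two \emph{prescribed} branches $z\ast\sigma_0$ and $z\ast\sigma_1$ of a $\le_m$-refinement $z$ of $q$, where $\sigma_0$ and $\sigma_1$ may disagree on several coordinates besides $\alpha$. Transporting the incomparability from the first configuration to the second requires an amalgamation: copy $q_0$ onto the coordinates where $\sigma_0$ and $\sigma_1$ agree (and onto all coordinates $\ge m$) to form a condition $p'\le q\ast\sigma_1$, extend $p'$ to some $w$ whose decided segment is incomparable with one of $\dot{r}_{q_0},\dot{r}_{q_1}$ (possible because $\dot{r}_{q_0}\perp\dot{r}_{q_1}$ and no extension decides all of $\dot{r}$), and then glue $q_0$ and $w$ into the $\sigma_0$- and $\sigma_1$-branches of $z$ while leaving the remaining branches of $q$ untouched, checking that the result is still a condition with $z\le_m q$, $z\ast\sigma_0\le q_0$ and $z\ast\sigma_1=w$. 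This case analysis (the paper's equation \eqref{eq:defz} and the verification around it) is the actual content of the implication; once it is in place, iterating over all pairs $(\sigma_0,\sigma_1)$ at each level and reading $s_\alpha$ off $r$ at the splitting nodes, as you describe, is indeed standard.
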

\begin{proof}

$\text{\ref{prop:realconstr-1}}\Rightarrow \text{\ref{prop:realconstr-2}}$: By contraposition, suppose that $p$ does not force $\dot{r} \not\in \mathrm{L}[\dot{G} \upharpoonright (\kappa\setminus\{\alpha\})]$. Then there exists $q \le p$ such that $q$ forces $\dot{r} \in \mathrm{L}[\dot{G} \upharpoonright (\kappa\setminus\{\alpha\})]$. Then, since $\dot{s}_\alpha$ is always forced not to belong to $\mathrm{L}[\dot{G} \upharpoonright (\kappa\setminus\{\alpha\})]$ by mutual genericity, $q$ forces $\dot{s}_\alpha \not\le_c \dot{r}$. Therefore $p \not\Vdash \dot{s}_\alpha \le_c \dot{r}$.\vspace{0.3em}

$\text{\ref{prop:realconstr-2}} \Rightarrow \text{\ref{prop:realconstr-3}}$: Again by contraposition, suppose that there exists $q \le p$ such that for every $q_0,q_1 \le q$, if $q_0 \upharpoonright (\kappa \setminus \{\alpha\}) = q_1 \upharpoonright (\kappa \setminus \{\alpha\})$, then $\dot{r}_{q \ast \sigma_0}$ and $\dot{r}_{q \ast \sigma_1}$ are comparable. Equivalently, for any $z \le q$, any initial segment of $\dot{r}$ decided by $z$ is already decided by $z \upharpoonright (\kappa\setminus \{\alpha\})$. Thus, $q \Vdash \dot{r} \in \mathrm{L}[\dot{G} \upharpoonright (\kappa\setminus\{\alpha\})]$.\vspace{0.3em}

$\text{\ref{prop:realconstr-3}} \Rightarrow \text{\ref{prop:realconstr-1}}$: Suppose that $p$ and $\dot{r}$ satisfy the hypotheses of \ref{prop:realconstr-3}. By Lemma~\ref{lemma:general} and the remark afterward, we can suppose without loss of generality that $\kappa = \omega$. Hence, we will denote $\alpha$ by $n$ so to highlight that we are talking about a natural number. 

By density, it suffices to show that there exists a $q \le p$ such that $q \Vdash \dot{s}_n \le_c \dot{r}$. 

\begin{claim}\label{claim:1}
For every $m > n$, for every $q \le p$, for every $\sigma_0,\sigma_1 \in {}^{m}({}^{m} 2)$, if $\sigma_0(n) \neq \sigma_1(n)$, then there exists a $z \le_{m} q$ such that $\dot{r}_{z \ast \sigma_0}$ and $\dot{r}_{z\ast \sigma_1}$ are incomparable.
\end{claim}
\begin{proof}
Fix an $m > n$, a $q \le p$ and $\sigma_0,\sigma_1 \in {}^{m}({}^{m} 2)$ such that $\sigma_0(n) \neq \sigma_1(n)$. By hypothesis, there are $q_0,q_1 \le q\ast \sigma_0$ such that $q_0 \upharpoonright (\omega \setminus \{n\}) = q_1 \upharpoonright (\omega \setminus \{n\})$, and such that $\dot{r}_{q_0}$ and $\dot{r}_{q_1}$ are incomparable.

Let
\[
E \coloneqq \big\{k \in\omega \mid k \ge m \text{ or } ( k < m \text{ and } \sigma_0 (k) = \sigma_1(k))\big\}.
\] 
Now let $p' \le q\ast \sigma_1$ be defined as follows: for each $k$, if $k \in E$, then $p'(k) \coloneqq q_0(k)$; if $k \not\in E$, then $p'(k) \coloneqq (q\ast\sigma_1)(k) = q(k)\ast \sigma_1(k)$. 

Fix a $w \le p'$ such that $\dot{r}_w$ is incomparable with either $\dot{r}_{q_0}$ or $\dot{r}_{q_1}$.  Suppose without loss of generality that $\dot{r}_w$ is incomparable with $\dot{r}_{q_0}$ (otherwise substitute $q_0$ with $q_1$ in what follows). Then let $z \le q$ be defined as follows: for every $k \ge m$, $z(k) = w(k)$; for every $k < m$, let $z(k)^m = q(k)^m$ and, for every $\tau \in {}^{m}2$, 
\begin{equation}\label{eq:defz}
z(k) \ast \tau = \begin{cases}
q(k) \ast \tau \ &\text{ if } \tau \not\in \{\sigma_0(k),\sigma_1(k)\}\\
q_0(k) \ &\text{ if } k\not\in E \text{ and } \tau = \sigma_0(k)\\
w(k) \ &\text{ otherwise}
\end{cases}
\end{equation}

Let us check that $z$ is well-defined. More precisely, by fixing some $k < m$ and some $\tau \in {}^m 2$, we need to verify that $q(k)(\tau)$ is an initial segment of the stem of $z(k) \ast \tau$ as prescribed by \eqref{eq:defz}: if $\tau$ is different from both $\sigma_0(k)$ and $\sigma_1(k)$ there is nothing to show, as by \eqref{eq:defz} $z(k)\ast \tau = q(k)\ast \tau$; if $k \not\in E$ and $\tau = \sigma_0(k)$, then $q(k)(\sigma_0(k))$ is an initial segment of the stem of $q_0(k)$ because, by definition of $q_0$, $q_0(k) \le q(k) \ast \sigma_0(k)$; if $k \in E$ and $\tau = \sigma_0(k) = \sigma_1(k)$, then $q(k)(\sigma_0(k))$ is an initial segment of the stem of $w$ because, by definition of $w, p'$ and $q_0$, $w(k) \le p'(k)  = q_0(k) \le q(k) \ast \sigma_0(k)$; finally, if $k \not\in E$ and $\tau = \sigma_1(k)$, then $q(k)(\tau)$ is an initial segment of the stem of $w(k)$ because, by definition of $w$ and $p'$, $w(k) \le p'(k) = q(k)\ast \sigma_1(k)$.

Clearly, by definition of $z$, $z \le_m q$. Moreover, it directly follows from \eqref{eq:defz} that $z \ast \sigma_1 = w$.  Once we show $z \ast \sigma_0 \le q_0$ we are done, as it would imply that $\dot{r}_{q_0}$  is an initial segment of $\dot{r}_{z \ast \sigma_0}$, and this, together with the fact that $\dot{r}_{z \ast \sigma_1} = \dot{r}_w$, results in $\dot{r}_{z \ast \sigma_0}$ and $\dot{r}_{z \ast \sigma_1}$ being incomparable, by our choice of $w$. To see this, pick any $k \in \omega$: if $k \ge m$, then $(z \ast \sigma_0)(k) = z(k)$, and $z(k) = w(k)$ by definition of $z$, and $w(k) \le p'(k) = q_0(k)$ by the definition of $w$ and $p'$; if $k < m$ and $k \not\in E$, then $(z \ast \sigma_0)(k) = z(k) \ast \sigma_0(k) = q_0(k)$, by \eqref{eq:defz}; finally, if $k < m$ and $k \in E$, then $(z \ast \sigma_0)(k) = z(k) \ast \sigma_0(k) = w(k)$ by definition of $z$, but, since $k \in E$, $w(k) \le q_0(k)$.

Hence, $z \ast \sigma_0 \le q_0$, and we are done.
\end{proof}

\begin{claim}\label{claim:2}
There exists a $q \le p$ such that for every $m>n$, for every $\sigma_0,\sigma_1 \in {}^m({}^m 2)$, if $\sigma_0(n) \neq \sigma_1(n)$, then $\dot{r}_{q \ast \sigma_0}$ and $\dot{r}_{q \ast \sigma_1}$ are incomparable.
\end{claim}
\begin{proof}
We define by induction a fusion sequence $(p_m)_{m\in\omega}$ such that: $p_m = p$ for every $m \le n+1$; for every $m > n$ , for every $\sigma_0,\sigma_1 \in {}^{m}({}^{m} 2)$, if $\sigma_0(n) \neq \sigma_1(n)$, then $\dot{r}_{p_{m+1}\ast \sigma_0}$ and $\dot{r}_{p_{m+1}\ast \sigma_1}$ are incomparable.

Suppose we have defined $p_m$ with $m \ge n+1$, towards building $p_{m+1}$. Fix an enumeration $\{(\sigma_0^1, \sigma_1^1), \dots, (\sigma_0^h, \sigma_1^h)\}$ of the couples $(\sigma_0, \sigma_1)$ of elements of ${}^{m}({}^{m} 2)$ such that $\sigma_0 (n) \neq \sigma_1(n)$. By Claim~\ref{claim:1}, we can define a $\le_{m}$-descending sequence $(q_k)_{k \le h}$ such that $q_0 = p_m$ and, for every $0<k \le h$, $\dot{r}_{q_{k} \ast \sigma_0^k}$ and $\dot{r}_{q_{k} \ast \sigma_1^k}$ are incomparable. Set $p_{m+1} = q_h$. Then $p_{m+1} \le_{m} p_m$ and satisfies the desired property. 

Now let $q$ be the fusion of the $p_m$s. For every $m>n$, for every $\sigma_0,\sigma_1$ in ${}^m({}^m 2)$ with $\sigma_0(n) \neq \sigma_1(n)$, we have that $\dot{r}_{q\ast \sigma_0}$ and $\dot{r}_{q\ast \sigma_1}$ are incompatible. Indeed, $q \le_{m+1} p_{m+1}$, and since $\dot{r}_{p_{m+1}\ast \sigma_0}$ and $\dot{r}_{p_{m+1}\ast \sigma_1}$ are incomparable by construction, we are done.
\end{proof}
Now that we have proven Claim~\ref{claim:2}, fix some $q \le p$ that satisfies its statement.
\begin{claim}
$q \Vdash \dot{s}_n \le_c \dot{r}$.
\end{claim}
\begin{proof}
Let us show that
\begin{equation}\label{eq:claim3}
q \Vdash \forall m > n \ \forall \sigma \in {}^m({}^m 2) \ \big(\dot{r}_{q\ast \sigma} \subset \dot{r} \Longrightarrow  \dot{s}_n \in [(q \ast \sigma)(n)]\big).
\end{equation}
Once \eqref{eq:claim3} is proven, it follows that $q \Vdash \dot{s}_n \le_c \dot{r}$. Indeed, it directly follows from \eqref{eq:claim3} that
\[
q \Vdash \dot{s}_n = \bigcup\big\{q(n)(\sigma(n)) \mid \exists m > n \ (\sigma \in {}^m({}^m 2) \text{ and }\dot{r}_{q \ast \sigma} \subset \dot{r}\big\}.
\]

Suppose towards a contradiction that \eqref{eq:claim3} does not hold. Then, there exists some $z \le q$, some $m > n$ and $\sigma \in {}^m({}^m 2)$ such that 
\[
z \Vdash \dot{r}_{q\ast \sigma} \subseteq \dot{r} \text{ and } \dot{s}_n \not\in [(q \ast \sigma)(n)].
\]
By extending $z$ if necessary, we can assume that there exists a $\tau \in {}^m({}^m 2)$ such that $z \le q\ast\tau$. In particular, $\dot{r}_{q\ast \tau}\subseteq \dot{r}_{z}$. The statement $z \Vdash ``\dot{r}_{q\ast \sigma} \subseteq \dot{r}$'' is equivalent to $\dot{r}_{q\ast \sigma} \subseteq \dot{r}_{z}$. Thus, $\dot{r}_{q\ast \sigma}$ and $\dot{r}_{q\ast \tau}$ are comparable.  On the other hand, it follows from $z$ forcing $\dot{s}_n \not\in [(q \ast \sigma)(n)]$ that $\tau(n) \neq \sigma(n)$, and therefore, by the way we picked $q$, $\dot{r}_{q\ast \sigma}$ and $\dot{r}_{q\ast \tau}$ are incomparable. Contradiction.
\end{proof}
\end{proof}

\begin{corollary}\label{cor:realconstr}
Let $\alpha \in \kappa$ and $p\in\mathbb{S}^\kappa$ and let  $\dot{r}$ be a $\mathbb{S}^\kappa$-name for a real. The following are equivalent:
\begin{enumerate}[label={\upshape \arabic*)}]
\itemsep0.3em
\item\label{cor:realconstr-1}
$p \Vdash \dot{s}_\alpha \not\le_c \dot{r}$.
\item\label{cor:realconstr-2}
$p \Vdash \dot{r} \in \mathrm{L}[\dot{G} \upharpoonright (\kappa \setminus \{\alpha\})]$.
\item\label{cor:realconstr-3}
For any $q \le p$ there exists $z \le q$ such that for every $z_0, z_1 \le z$, if $z_0 \upharpoonright (\kappa\setminus \{\alpha\}) = z_1 \upharpoonright (\kappa\setminus \{\alpha\})$, then $\dot{r}_{z_0} = \dot{r}_{z_1}$.
\end{enumerate}
\end{corollary}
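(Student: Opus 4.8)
The plan is to deduce the corollary directly from Proposition~\ref{prop:realconstr}, reading each listed condition as the \emph{forced negation} of the correspondingly numbered condition there. Write $\theta$ for $\dot s_\alpha \le_c \dot r$ and $\eta$ for $\dot r \in \mathrm{L}[\dot G \upharpoonright (\kappa\setminus\{\alpha\})]$, so that at a condition $q$ the proposition's \ref{prop:realconstr-1} and \ref{prop:realconstr-2} read $q \Vdash \theta$ and $q \Vdash \neg\eta$, while the corollary's \ref{cor:realconstr-1} and \ref{cor:realconstr-2} read $p \Vdash \neg\theta$ and $p \Vdash \eta$. I would use the elementary fact that $p \Vdash \neg\chi$ iff no $q \le p$ forces $\chi$: applied to $\chi = \theta$ it gives that \ref{cor:realconstr-1} is equivalent to ``no $q \le p$ satisfies \ref{prop:realconstr-1}'', and applied to $\chi = \neg\eta$ (using $\neg\neg\eta \equiv \eta$) it gives that \ref{cor:realconstr-2} is equivalent to ``no $q \le p$ satisfies \ref{prop:realconstr-2}''. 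Since Proposition~\ref{prop:realconstr} yields \ref{prop:realconstr-1} $\Leftrightarrow$ \ref{prop:realconstr-2} at every condition, the equivalence \ref{cor:realconstr-1} $\Leftrightarrow$ \ref{cor:realconstr-2} is then immediate.

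To bring in \ref{cor:realconstr-3} I would close the cycle with \ref{cor:realconstr-3} $\Rightarrow$ \ref{cor:realconstr-1} and \ref{cor:realconstr-2} $\Rightarrow$ \ref{cor:realconstr-3}. For the first implication, suppose \ref{cor:realconstr-1} fails, so some $q \le p$ forces $\theta$; by Proposition~\ref{prop:realconstr} this $q$ satisfies condition \ref{prop:realconstr-3}. Feeding the witness $z \le q$ supplied by \ref{cor:realconstr-3} into \ref{prop:realconstr-3} (as the ``$q$'' there) produces $q_0, q_1 \le z$ agreeing off $\alpha$ with $\dot r_{q_0}, \dot r_{q_1}$ incomparable, contradicting that \ref{cor:realconstr-3} forces $\dot r_{q_0} = \dot r_{q_1}$. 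Here I only use that equal sequences are comparable.

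The one step with genuine content is \ref{cor:realconstr-2} $\Rightarrow$ \ref{cor:realconstr-3}, and I expect it to be the main obstacle, since it is the only place where the strengthening from ``comparable'' (the literal negation of \ref{prop:realconstr-3}) to ``equal'' must be justified. I would exploit the product structure $\mathbb{S}^\kappa \cong (\mathbb{S}^\kappa \upharpoonright (\kappa\setminus\{\alpha\})) \times \mathbb{S}$: writing $P = \mathbb{S}^\kappa\upharpoonright(\kappa\setminus\{\alpha\})$, a real forced by $p$ into the intermediate model $\mathrm{L}[\dot G\upharpoonright(\kappa\setminus\{\alpha\})]$ is, densely below $p$, forced equal to a $P$-name. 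So given $q \le p$ I would find $z \le q$ and a $P$-name $\dot r'$ with $z \Vdash \dot r = \dot r'$, and take this $z$ as the witness. For $z_0, z_1 \le z$ agreeing off $\alpha$ we have $z_0 \upharpoonright (\kappa\setminus\{\alpha\}) = z_1\upharpoonright(\kappa\setminus\{\alpha\})$, and by the product lemma the longest initial segment of the $P$-name $\dot r'$ decided by $z_i$ depends only on $z_i\upharpoonright(\kappa\setminus\{\alpha\})$; since $z_i \le z$ forces $\dot r = \dot r'$, this segment equals $\dot r_{z_i}$, whence $\dot r_{z_0} = \dot r_{z_1}$. The only point requiring care is the invocation of the standard fact that membership in a complete-subforcing (product-factor) extension yields such a name densely below $p$; beyond that the verification is routine.
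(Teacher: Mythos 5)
Your proposal is correct and follows essentially the same route as the paper: everything is reduced to Proposition~\ref{prop:realconstr} via negation/density arguments, and the only substantive step (your \ref{cor:realconstr-2}~$\Rightarrow$~\ref{cor:realconstr-3}) is exactly the paper's argument for \ref{cor:realconstr-1}~$\Rightarrow$~\ref{cor:realconstr-3}, namely passing densely below $p$ to a condition $z$ and an $\mathbb{S}^\kappa\upharpoonright(\kappa\setminus\{\alpha\})$-name $\dot{r}'$ with $z \Vdash \dot{r}=\dot{r}'$ and invoking the product lemma. The minor reshuffling of which implications are proved directly does not change the mathematical content.
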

\begin{proof}
Directions $\text{\ref{cor:realconstr-3}}\Rightarrow \text{\ref{cor:realconstr-2}} \Rightarrow \text{\ref{cor:realconstr-1}}$ have already been shown in the proof for $\text{\ref{cor:realconstr-1}}\Rightarrow \text{\ref{cor:realconstr-2}} \Rightarrow \text{\ref{cor:realconstr-3}}$ of Proposition~\ref{prop:realconstr}.\vspace{0.3em}

$\text{\ref{cor:realconstr-1}} \Rightarrow \text{\ref{cor:realconstr-3}}$: Fix some $q \le p$. By hypothesis, $q \le p \Vdash \dot{s}_\alpha \not\le_c \dot{r}$. Hence, by Proposition~\ref{prop:realconstr}, $q$ does not force $\dot{r} \not\in \mathrm{L}[\dot{G} \upharpoonright (\kappa\setminus\{\alpha\})]$. In particular, there exists a $z \le q$ and a $\mathbb{S}^\kappa \upharpoonright (\kappa \setminus \{\alpha\})$-name $\dot{r}'$ such that $z$ forces $\dot{r} = \dot{r}'$. It quickly follows that $\dot{r}_{z_0} = \dot{r}_{z_1}$ for every $z_0, z_1 \le z$ with $z_0 \upharpoonright (\kappa\setminus\{\alpha\}) = z_1 \upharpoonright (\kappa\setminus\{\alpha\})$.
\end{proof}

The following result is key. It implies that any real in $\mathrm{L}[G]$ is equiconstructible with the set of Sacks reals constructible relative to it.

\begin{proposition}\label{prop:main}
In $\mathrm{L}[G]$, for every real $r$, the following hold:
\begin{enumerate}[label={\upshape \arabic*)}]
\itemsep0.3em
\item\label{prop:main1} For every $A \in [S]^{\le\omega}$, if $\{s \in S\mid s \le_c r\}\subseteq A$, then $r \le_c A$.
\item\label{prop:main2} $\{s \in S \mid s \le_c r\} \le_c r$.
\end{enumerate} 
In particular, $r \equiv_c \{s \in S \mid s \le_c r\}$.
\end{proposition}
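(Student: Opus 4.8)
The plan is to prove the two items separately; the final ``in particular'' is then immediate, since item~\ref{prop:main1} applied to $A=\{s\in S\mid s\le_c r\}$ already yields $r\le_c\{s\in S\mid s\le_c r\}$, while item~\ref{prop:main2} gives the reverse inequality. Throughout I write $B\coloneqq\{s\in S\mid s\le_c r\}$ and $E\coloneqq\{\alpha\mid s_\alpha\le_c r\}$, so that $B=\{s_\alpha\mid\alpha\in E\}$. By Lemma~\ref{lemma:general} I fix a constructible, constructibly countable $D$ with $r\in\mathrm{L}[G\upharpoonright D]$; by mutual genericity, $s_\alpha\le_c r$ forces $s_\alpha\in\mathrm{L}[G\upharpoonright D]$ and hence $\alpha\in D$, so $E\subseteq D$ and $B$ is countable. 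Working inside $\mathrm{L}[G\upharpoonright D]$ and using $\mathbb{S}^\kappa\upharpoonright D\cong\mathbb{S}^\omega$, I assume $\kappa=\omega$ throughout and write $n$ for indices.

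For item~\ref{prop:main1}, set $I\coloneqq\{n\mid s_n\in A\}\supseteq E$. By Proposition~\ref{prop:definA} the enumeration $e_A\le_c A$, so the sequence $\langle s_n\mid n\in I\rangle$ is constructible from $A$; since the Sacks generic filter on each coordinate is recoverable from its generic real, $\mathrm{L}[G\upharpoonright I]=\mathrm{L}[\langle s_n\mid n\in I\rangle]\subseteq\mathrm{L}(A)$, and it therefore suffices to show $r\in\mathrm{L}[G\upharpoonright I]$. For every $n\notin I$ we have $s_n\notin B$, hence $s_n\not\le_c r$, and so by Corollary~\ref{cor:realconstr} $r\in\mathrm{L}[G\upharpoonright(\omega\setminus\{n\})]$. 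The remaining point is a minimal-support statement for the Sacks product: from $r\in\mathrm{L}[G\upharpoonright(\omega\setminus\{n\})]$ for every $n\in D\setminus I$ one concludes $r\in\mathrm{L}[G\upharpoonright I]$. This I would establish by the two-step intersection property (reals lying in $\mathrm{L}[G\upharpoonright X]\cap\mathrm{L}[G\upharpoonright Y]$ lie in $\mathrm{L}[G\upharpoonright(X\cap Y)]$, which follows from Corollary~\ref{cor:realconstr}) together with a fusion argument removing the countably many coordinates of $D\setminus I$ at once.

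For item~\ref{prop:main2} the real work is to recover the whole set $B$ — equivalently the index set $E$ — from $r$ alone; note that $E$ is genuinely generic-dependent (for instance it may code $s_0$), so it cannot be read off a ground-model condition. The strategy is a forcing argument producing a single $q\in G$ that works uniformly for all coordinates. Extending the fusion of Claims~\ref{claim:1}--\ref{claim:2} so as to treat, at stage $m$, every coordinate $n<m$ and every pair $\sigma_0,\sigma_1\in{}^m({}^m2)$ differing only at $n$, I would build $q\le p$ controlling the comparability structure of the initial segments $\dot r_{q\ast\sigma}$: for each such pair, $\dot r_{q\ast\sigma_0}$ and $\dot r_{q\ast\sigma_1}$ are either equal (when coordinate $n$ is locally irrelevant to $\dot r$) or incomparable (when it is relevant). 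Granting such a $q\in G$, I claim that in $\mathrm{L}[r]$ one can define, for each $n$, the set $C_n\coloneqq\{q(n)(\sigma(n))\mid\exists m>n\ \sigma\in{}^m({}^m2)\ \text{and}\ \dot r_{q\ast\sigma}\subset r\}$, using only $r$ and the constructible parameters $q,\dot r$.

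The key observation is then that $C_n$ is a branch, and hence defines $s_n$ via the formula in the proof of Proposition~\ref{prop:realconstr}, exactly when $n\in E$: if $s_n\le_c r$ then any cell consistent with $r$ must agree with $s_n$ on coordinate $n$, so $C_n$ is linearly ordered and $\bigcup C_n=s_n$; whereas if $s_n\not\le_c r$, then by Corollary~\ref{cor:realconstr} $r$ is decided without reference to coordinate $n$, so cells with distinct values of $\sigma(n)$ are all consistent with $r$, forcing $C_n$ to contain the incompatible nodes $q(n)(\sigma(n))$ for distinct $\sigma(n)$ and hence not to be a branch. Consequently $E=\{n\mid C_n\text{ is a branch}\}\le_c r$ and $B=\{\bigcup C_n\mid n\in E\}\le_c r$, as desired. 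I expect the main obstacle to be precisely the fusion construction of the single $q$ that simultaneously decides the comparability dichotomy for all coordinates and is robust enough that the arranged equalities and incomparabilities survive the later stages; this is the step that genuinely upgrades the single-coordinate Claims~\ref{claim:1}--\ref{claim:2} to the uniform statement needed here.
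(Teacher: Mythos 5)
Your item 2) is essentially the paper's own argument: build in $\mathrm{L}$ a single fusion which, cell by cell, records whether coordinate $n$ is relevant to $\dot r$ below that cell (the paper encodes this in an auxiliary map $\delta$) and arranges that the decided initial segments $\dot r_{q\ast\sigma_0}$, $\dot r_{q\ast\sigma_1}$ are incomparable when the cells differ at a relevant coordinate; then, in $\mathrm{L}[r]$, one recovers $\{s\in S\mid s\le_c r\}$ by checking which of your sets $C_n$ are linearly ordered. This is the right outline, though you defer exactly the step where the paper does the work (the simultaneous fusion and the verification that ``$\dot r_{w\ast\sigma}\subset r$'' correctly identifies the generic cell on the relevant coordinates).

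The genuine gap is in item 1). You reduce it to: from $r\in\mathrm{L}[G\upharpoonright(\omega\setminus\{n\})]$ for every $n\in D\setminus I$, conclude $r\in\mathrm{L}[G\upharpoonright I]$. This does not follow from the two-set intersection property you invoke (which itself is not a formal consequence of the single-coordinate Corollary~\ref{cor:realconstr}): even granting it, induction only removes finitely many coordinates, i.e.\ gives $r\in\mathrm{L}[G\upharpoonright(\omega\setminus F)]$ for finite $F\subseteq D\setminus I$, and the passage to the countable intersection is precisely the content of item 1), not a tool you may assume. Moreover, the ``fusion argument removing the countably many coordinates of $D\setminus I$ at once'' cannot be run as described: $A$, and hence $I=\{n\mid s_n\in A\}$, is an arbitrary countable set of $\mathrm{L}[G]$ and is in general not constructible, so there is no ground-model fusion indexed by $D\setminus I$. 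The paper avoids this by constructing one fusion $w$ over \emph{all} coordinates, together with the constructible labelling $\delta$, before knowing which coordinates will turn out to be relevant; item 1) is then proved inside $\mathrm{L}[G]$ by the chain $\tau_0=\sigma,\dots,\tau_m=\bar\sigma_m$, altering one coordinate outside $\mathrm{ran}(e_A)$ at a time and using $\delta=0$ at that coordinate to see that the decided initial segment of $\dot r$ is unchanged. Some such uniform, genericity-independent construction is needed; as written, your item 1) assumes the minimal-support statement it is meant to establish.
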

\begin{proof}

By Lemma~\ref{lemma:general} and the remark afterward, we can suppose without loss of generality that $\kappa = \omega$.

Given a $\sigma \in {}^n({}^n 2)$ and some $m \le n$, we denote by $\sigma \upupharpoonright m$ the map whose domain is $m$ and such that $(\sigma \upupharpoonright m)(k) = \sigma (k) \upharpoonright m$ for every $k < m$. Note that $\sigma \upupharpoonright m$ is the natural projection of $\sigma$ onto ${}^m({}^m 2)$.

Fix an $\mathbb{S}^\omega$-name $\dot{r}$ for a real $r$ and fix a condition $p \in \mathbb{S}^\omega$. Before proving \ref{prop:main1} and \ref{prop:main2}, we first need to define in $\mathrm{L}$, by induction on $n$, a fusion sequence $(p_n)_{n\in\omega}$ along with an auxiliary map $\delta: \bigcup_{n\in\omega} {}^n({}^n 2) \rightarrow 2$ that satisfy the following properties:
\begin{enumerate}[label={\upshape \roman*)}]
\itemsep0.3em
\item\label{prop:main:cond1} $p_0 = p$,
\item\label{prop:main:cond2} For every $n \in \omega$ for every $\sigma \in {}^{n}({}^{n} 2)$, $p_{n+1}\ast \sigma$ decides $\dot{r} \upharpoonright n$,

\item\label{prop:main:cond3} For every $n \in \omega$, for every $\sigma \in {}^{n}({}^{n} 2)$, either:
\begin{enumerate}[label={\upshape iii\alph*)}, leftmargin=2.5\parindent]

\item\label{prop:main:cond3a} $p_{n+1} \ast \sigma \Vdash \dot{s}_{n} \le_c \dot{r}$, or
\item\label{prop:main:cond3b} for every $q_0, q_1 \le p_{n+1} \ast \sigma$, if $q_0 \upharpoonright (\omega\setminus \{n\}) = q_1 \upharpoonright (\omega\setminus \{n\})$, then $\dot{r}_{q_0} = \dot{r}_{q_1}$.
\end{enumerate} 
In the first case we set $\delta(\sigma)= 1$; otherwise we set $\delta(\sigma) = 0$,
\item\label{prop:main:cond4} For every $m,n\in\omega$ with $m<n$, for every $\sigma_0,\sigma_1 \in {}^{n}({}^{n} 2)$, if $\sigma_0(m)\neq \sigma_1(m)$ and $\sigma_0 \upupharpoonright m = \sigma_1 \upupharpoonright m$ and $\delta(\sigma_0\upupharpoonright m) = 1$, then $\dot{r}_{p_{n+1} \ast \sigma_0}$ and $\dot{r}_{p_{n+1} \ast \sigma_1}$ are incomparable. 
\end{enumerate}

Note that \ref{prop:main:cond3a} and \ref{prop:main:cond3b} are mutually exclusive, as, by Corollary~\ref{cor:realconstr}, \ref{prop:main:cond3b} implies $p_{n+1}\ast \sigma \Vdash \dot{s}_n \not\le_c \dot{r}$. In particular, the map $\delta$ is well-defined.

Now let us proceed with the inductive construction. Suppose we have defined $p_n$, towards defining $p_{n+1}$.  We do it in two steps: we first define a condition $q \le_{n} p_n$ that satisfies \ref{prop:main:cond2} and \ref{prop:main:cond3}. Then we define a condition $p_{n+1} \le_{n} q$ to take care of \ref{prop:main:cond4}. 

Fix an enumeration $\{\sigma_0, \dots, \sigma_h\}$ of $\sigma \in {}^{n}({}^{n} 2)$. It is straightforward, using Corollary~\ref{cor:realconstr}, to define a $\le_{n}$-decreasing sequence $(q_k)_{k\le h}$ such that $q_0 \le_{n} p_n$, and, for every $k \le h$, $q_k \ast \sigma_k$ decides $\dot{r}\upharpoonright n$ and satisfies either \ref{prop:main:cond3a} or \ref{prop:main:cond3b}. Given such a sequence, we let $q = q_h$. 

We now define the condition $p_{n+1} \le_{n} q$ that takes care of \ref{prop:main:cond4}. Fix an enumeration $\{(\sigma_0^0, \sigma_1^0, m_0), \dots, (\sigma^{j}_0, \sigma^{j}_1, m_j)\}$ of the triples $(\sigma_0, \sigma_1, m)$ with $\sigma_0, \sigma_1 \in {}^{n}({}^{n} 2)$ and $m < n$ such that
\[
\sigma_0\upupharpoonright m = \sigma_1\upupharpoonright m \text{ and } \sigma_0 (m) \neq \sigma_1(m) \text{ and }\delta(\sigma_0\upupharpoonright m) = 1.
\]
As before, we define a $\le_{n}$-descending sequence $(z_k)_{k \le j+1}$. Let $z_0 = q$. Fix some $k \le j$ and suppose we have constructed $z_k$, towards defining $z_{k+1}$.  Since $\delta(\sigma^k_0 \upupharpoonright m_k) = 1$, we know, by $\delta$'s definition, that $p_{m_k+1} \ast (\sigma^k_0\upupharpoonright m_k) \Vdash \dot{s}_{m_k}\le_c \dot{r}$. Now, since $z_{k} \le_{n} p_n \le_{m_k+1} p_{m_k+1}$, we have $z_k \ast (\sigma^k_0 \upupharpoonright m_k) \le p_{m_k+1} \ast (\sigma^k_0\upupharpoonright m_k)$. Therefore $z_k \ast (\sigma^k_0\upupharpoonright m_k) \Vdash \dot{s}_{m_k}\le_c \dot{r}$. Finally, we let $z_{k+1} \le_{n} z_k$ be such that $\dot{r}_{z_{k+1} \ast \sigma^k_0}$ and $\dot{r}_{z_{k+1} \ast \sigma^k_1}$ are incomparable---such a condition exists by Claim~\ref{claim:1} of Proposition~\ref{prop:realconstr}.

Let $p_{n+1}$ be $z_{j+1}$. By construction, $p_{n+1}$ satisfies conditions~\ref{prop:main:cond2}-\ref{prop:main:cond4}. This ends the inductive definition of the fusion sequence $(p_n)_{n\in\omega}$. Let $w$ be its fusion.  Going back to $\mathrm{L}[G]$, we can suppose that $w \in G$, by a density argument. Then, we claim the following (recall that we are assuming $\kappa = \omega$):
\begin{claim}\label{claim:main1}
For every $A\subseteq S$ with $\{s \in S \mid s \le_c r\}\subseteq A$, $r \le_c A$.
\end{claim}
\begin{proof}
Let $e_A: A\rightarrow \omega$ be as in the statement of Proposition~\ref{prop:definA}. For each $m\in\omega$, let $\bar{\sigma}_m$ be the unique element of ${}^m({}^m 2)$ such that $w \ast \bar{\sigma}_m  \in G$, or, equivalently, such that $s_k \in [(w \ast \bar{\sigma}_m)(k)]$ for every $k \in\omega$. We want to prove the following statement:
\begin{multline}\label{eq:3}
\forall m\in\omega \ \forall \sigma \in {}^m({}^m 2) \ \big(\forall k < m \ (k \in \text{ran}(e_A) \Rightarrow\\ \sigma(k) = \bar{\sigma}_m(k)) \Rightarrow \dot{r}_{w\ast \sigma} \subset r\big).
\end{multline}
Once we show \eqref{eq:3} we are done, as we would have (in $\mathrm{L}[G]$)
\begin{multline*}
r = \bigcup \Big\{\dot{r}_{w\ast \sigma} \bigm| \exists m\in\omega \ \big(\sigma \in {}^m({}^m 2) \text{ and } \\\forall k < m \ \big(k \in \mathrm{ran}(e_A) \Rightarrow e_A^{-1}(k) \in [(w \ast \sigma)(k)]\big)\big)\Big\}.
\end{multline*}

Indeed, note that, by condition~\ref{prop:main:cond2} of our fusion sequence, $\mathrm{length}(\dot{r}_{w\ast\sigma}) \ge m$ for every $\sigma \in {}^m({}^m 2)$. Since, by Proposition~\ref{prop:definA}, $e_A \in \mathrm{L}(A)$, we conclude that, by absoluteness, $r \in \mathrm{L}(A)$.

Towards showing that \eqref{eq:3} holds, fix an $m \in \omega$ and a $\sigma \in {}^m({}^m 2)$ such that for all $k<m$ with $k \in \text{ran}(e_A)$, $\sigma(k) = \bar{\sigma}_m(k)$. From the definition of $\bar{\sigma}_m$ it directly follows that $\dot{r}_{w\ast \bar{\sigma}_m} \subset r$. We now build a sequence $(\tau)_{i \le m}$ of elements of ${}^m({}^m 2)$ such that $\tau_0 = \sigma$, $\tau_{m} = \bar{\sigma}_m$  and $\dot{r}_{w \ast \tau_{i+1}} = \dot{r}_{w \ast \tau_{i}}$ for all $i<m$. This proves that $\dot{r}_{w\ast \sigma} = \dot{r}_{w\ast \bar{\sigma}_m} \subset r$.

For each $i < m$, let $\tau_{i+1}$ be defined as the element of ${}^m({}^m 2)$ such that $\tau_{i+1}(k) = \bar{\sigma}_m(k)$ for all $k \le i$, and $\tau_{i+1}(k) = \sigma(k)$ for all $k>i$.

Fix an $i<m$, and suppose that $\tau_{i+1} \neq \tau_i$. By $\tau_i$'s definition, the only coordinate on which $\tau_{i+1}$ and $\tau_i$ can differ is the $i$-th. In particular, $\bar{\sigma}_m(i) = \tau_{i+1}(i) \neq \tau_i(i) = \sigma(i)$, and $\tau_{i+1} \upharpoonright (m\setminus\{i\}) = \tau_{i} \upharpoonright (m\setminus\{i\})$. By the way we picked $\sigma$, it must be the case that $i\not\in \text{ran}(e_A)$, or, equivalently, that $s_i \not\in A$. In particular, by our hypothesis on $A$, $s_{i} \not\le_c r$. Thus, $\delta(\bar{\sigma}_i) = 0$, as otherwise, by $\delta$'s definition, $p_{i+1} \ast \bar{\sigma}_i$ would force $\dot{s}_i \le_c \dot{r}$, but this is impossible, as $p_{i+1}\ast\bar{\sigma}_i$, which is extended by $w\ast \bar{\sigma}_i$, belongs to the the generic filter $G$.

Note that both $w\ast \tau_i$ and $w\ast \tau_{i+1}$ extend $p_{i+1}\ast \bar{\sigma}_i$, and that $(w\ast \tau_i) \upharpoonright (\omega\setminus \{i\}) = (w\ast \tau_{i+1}) \upharpoonright (\omega\setminus \{i\})$. By the fact that $\delta(\bar{\sigma}_i) = 0$, we conclude that $\dot{r}_{w\ast\tau_i} = \dot{r}_{w\ast \tau_{i+1}}$ and we are done.
\end{proof}
\begin{claim}\label{claim:main2}
$ \{s \in S \mid s \le_c r\} \le_c r$.
\end{claim}
\begin{proof}

Let the $\bar{\sigma}_m$s be defined as the beginning of proof of Claim~\ref{claim:main1}. We want to show the following:

\begin{equation}\label{eq:4}
  \tag{$\ddagger$}
  \parbox{\dimexpr\linewidth-10em}{
    For every $m \in\omega$, for every $\sigma \in {}^m({}^m 2)$ such that $\dot{r}_{w \ast \sigma} \subset r$, the following holds:
    \begin{enumerate}[label={$(\ddagger)_{\arabic*}$}]
    \itemsep0.3em
    \item For all $k < m$, if $s_k \le_c r$, then $\sigma(k) = \bar{\sigma}_m(k)$.
    \item $s_m \le_c r$ if and only if $\delta(\sigma) = 1$.
    \end{enumerate}
}
\end{equation}

\noindent Once we show \eqref{eq:4} we are done. Indeed, it directly follows from \eqref{eq:4} that, in $\mathrm{L}[G]$,
\begin{multline*}
\{s \in S \mid s \le_c r\} = \Big\{\bigcup\{w(k)(\tau(k)) \mid m > k \text{ and } \tau \in {}^m({}^m 2) \text{ and } \dot{r}_{w \ast \tau} \subset r\} \\ \bigm| k \in \omega \text{ and } \exists \sigma \in {}^k ({}^k 2) \ (\delta(\sigma) = 1 \text{ and } \dot{r}_{w \ast \sigma} \subset r)\Big\}
\end{multline*}

As $w$, $\dot{r}$ and $\delta$ are all constructible, then, by absoluteness, we conclude that $\{s \in S \mid s \le_c r\} \in \mathrm{L}[r]$.

Towards proving \eqref{eq:4}, fix an $m\in\omega$ and $\sigma \in {}^m({}^m 2)$ such that $\dot{r}_{w\ast \sigma} \subset r$. We first want to prove $\eqref{eq:4}_1$. Suppose towards a contradiction that there exists a $k< m$ such that $s_k \le_c r$ and $\sigma(k)\neq \bar{\sigma}_m(k)$. Let $\ubar{k}$ be the least such. Let $\tau \in  {}^m({}^m 2)$ be defined as follows: for every $k< m$, if $ k < \ubar{k}$, then $\tau(k) = \sigma(k)$; otherwise, $\tau(k) = \bar{\sigma}_m(k)$.

We want to show that $\dot{r}_{w\ast \tau} \subset r$ and, moreover, that $\dot{r}_{w\ast \tau}$  and $\dot{r}_{w\ast \sigma}$ are incomparable, which would lead to a contradiction, as we are assuming $\dot{r}_{w\ast \sigma} \subset r$. 

By the minimality of $\ubar{k}$, we have that $\tau(k) = \bar{\sigma}_m(k)$ for every $k < m$ such that $s_k \le_c r$. Thus, by \eqref{eq:3} with $A = \{s \in S \mid s \le_c r\}$, we conclude $\dot{r}_{w\ast \tau} \subset r$. So we are left to show that $\dot{r}_{w\ast \tau}$  and $\dot{r}_{w\ast \sigma}$ are incomparable in order to reach the desired contradiction. Note that, by the definition of $\tau$, $\tau \upupharpoonright \ubar{k} = \sigma \upupharpoonright \ubar{k}$. 

\begin{subclaim}\label{subclaim:main2}
$\delta(\sigma \upupharpoonright \ubar{k}) = 1$.
\end{subclaim}
\begin{proof}
We prove the subclaim using an argument analogous to the one used to prove \eqref{eq:3} and show that $\delta(\sigma \upupharpoonright \ubar{k}) = \delta(\bar{\sigma}_{\ubar{k}})$. Then, as $s_{\ubar{k}} \le_c r$, it must be that $\delta(\bar{\sigma}_{\ubar{k}}) = 1$, and we conclude $\delta(\sigma \upupharpoonright \ubar{k}) = \delta(\bar{\sigma}_{\ubar{k}})  = 1$ as desired.

We build a sequence $(\rho_i)_{i \le \ubar{k}}$ of elements of ${}^\ubar{k}({}^\ubar{k} 2)$ such that $\rho_0 = \sigma \upupharpoonright \ubar{k}$, $\rho_{\ubar{k}} = \bar{\sigma}_{\ubar{k}}$  and $\delta(\rho_i) = \delta(\rho_{i+1})$ for all $i<\ubar{k}$. This proves that $\delta(\sigma \upupharpoonright \ubar{k}) = \delta(\bar{\sigma}_{\ubar{k}})$.

For each $i < \ubar{k}$, let $\rho_{i+1}$ be defined as the element of ${}^\ubar{k}({}^\ubar{k} 2)$ such that $\rho_{i+1}(k) = \bar{\sigma}_{\ubar{k}}(k)$ for all $k \le i$, and $\rho_{i+1}(k) = \sigma(k) \upharpoonright \ubar{k}$ for all $k>i$.

Fix an $i<\ubar{k}$, and suppose that $\rho_{i+1} \neq \rho_i$. By $\rho_i$'s definition, the only coordinate on which $\rho_{i+1}$ and $\rho_i$ can differ is the $i$-th. In particular, $\bar{\sigma}_{\ubar{k}}(i) = \rho_{i+1}(i) \neq \rho_i(i) = \sigma(i) \upharpoonright \ubar{k}$, and $\rho_{i+1} \upharpoonright (\ubar{k}\setminus\{i\}) = \rho_{i} \upharpoonright (\ubar{k}\setminus\{i\})$. By the minimality of $\ubar{k}$, it must be the case that $s_{i} \not\le_c r$. Thus, $\delta(\bar{\sigma}_i) = 0$, as otherwise, by $\delta$'s definition, $p_{i+1} \ast \bar{\sigma}_i$ would force $\dot{s}_i \le_c \dot{r}$, but this is impossible, as $p_{i+1}\ast\bar{\sigma}_i$, which is extended by $w\ast \bar{\sigma}_i$, belongs to the the generic filter $G$.

By the fact that $\delta(\bar{\sigma}_i) = 0$, and by $\delta$'s definition, we conclude that there exists a $\mathbb{S}^\omega \upharpoonright (\omega\setminus \{i\})$-name $\dot{r}'$ such that $p_{i+1} \ast \bar{\sigma}_i \Vdash \dot{r} = \dot{r}'$. In particular, for every condition $q \le p_{i+1} \ast \bar{\sigma}_i$, the following holds by absoluteness of the constructibility preorder,
\[
q \Vdash \dot{s}_\ubar{k} \le_c \dot{r}' \iff q \upharpoonright (\omega \setminus \{i\}) \Vdash \dot{s}_\ubar{k} \le_c \dot{r}'.
\]
Thus, since both $p_{\ubar{k}+1} \ast \rho_i$ and $p_{\ubar{k}+1} \ast \rho_{i+1}$ extend $p_{i+1} \ast \bar{\sigma}_i$ and since $(p_{\ubar{k}+1} \ast \rho_i) \upharpoonright (\omega \setminus \{i\}) = (p_{\ubar{k}+1} \ast \rho_{i+1}) \upharpoonright (\omega \setminus \{i\})$, we conclude $\delta(\rho_i) = \delta(\rho_{i+1})$.
\end{proof}

By Subclaim~\ref{subclaim:main2} and by condition~\ref{prop:main:cond4} of the fusion sequence, we know that $\dot{r}_{p_{m+1} \ast \tau}$ and $\dot{r}_{p_{m+1} \ast \sigma}$ are incomparable. But since $w \le_{m+1} p_{m+1}$, we have $\dot{r}_{p_{m+1} \ast \tau} \subseteq \dot{r}_{w \ast \tau} $ and $\dot{r}_{p_{m+1} \ast \sigma} \subseteq \dot{r}_{w \ast \sigma} $. Therefore, $\dot{r}_{w \ast \tau}$ and $\dot{r}_{w \ast \sigma}$ are incomparable. We have reached the desired contradiction, and we conclude that  $\eqref{eq:4}_1$ holds.

Now we want to show $\eqref{eq:4}_2$. Using $\eqref{eq:4}_1$, and an argument analogous to the one used in the proof of Subclaim~\ref{subclaim:main2}, we can show that $\delta(\sigma) = \delta(\bar{\sigma}_m)$. Indeed, note that by $\eqref{eq:4}_1$, $\sigma(k) \neq \bar{\sigma}_m(k)$ implies $s_k \not\le_c r$ for every $k < m$.  But then, since $s_m \le_c r$ holds if and only if $\delta(\bar{\sigma}_m)= 1$, we conclude that $s_m \le_c r$ holds if and only if $\delta(\sigma) = 1$.
\end{proof}
\end{proof}

\begin{proof}[Proof of Theorem~{\upshape\ref{thm:representation}}]
Consider the following map:
\begin{align*}
\Omega: (\mathcal{D}_c, \le_c) &\longrightarrow (\mathcal{R}, \subseteq)\\ \mathbf{r} &\longmapsto \{\alpha\in\kappa \mid \mathbf{s}_\alpha \le_c \mathbf{r}\}.
\end{align*}
We claim that $\Omega$ is an isomorphism. But first, we need to show that it is well-defined, i.e., that the range of $\Omega$ is a subset of $\mathcal{R}$. Fix an $\mathbf{r} \in \mathcal{D}_c$. It immediately follows from Lemma~\ref{lemma:general} that $\Omega(\mathbf{r}) \in [\kappa]^{\le\omega}$. Now fix an $\alpha \in \kappa$ such that $\mathbf{s}_\alpha \le_c \Omega(\mathbf{r})$ towards showing that $\alpha \in \Omega(\mathbf{r})$.  Let $S_\mathbf{r} \coloneqq \{s \in S \mid \mathbf{s} \le_c \mathbf{r}\}$. Note that $\Omega(\mathbf{r}) = \mathrm{ran}(e_{S_\mathbf{r}})$, where the map $e$ is the one defined in Proposition~\ref{prop:definA}. By \ref{prop:main2} of Proposition~\ref{prop:main}, $S_\mathbf{r} \le_c \mathbf{r}$. We already noted that $S_\mathbf{r}$ must be countable. Finally, from \ref{prop:definA-1} of Proposition~\ref{prop:definA} it follows that $\Omega(\mathbf{r}) = \mathrm{ran}(e_{S_\mathbf{r}}) \le_c S_\mathbf{r}$. Thus, $\Omega(\mathbf{r}) \le_c S_\mathbf{r} \le_c \mathbf{r}$. Since, by assumption, $\mathbf{s}_\alpha \le_c \Omega(\mathbf{r})$, we conclude that $\mathbf{s}_\alpha \le_c \mathbf{r}$ and hence $\alpha \in \Omega(\mathbf{r})$ by $\Omega$'s definition. Hence, $\Omega$ is well-defined. 

\begin{claim}
$\Omega$ is injective.
\end{claim}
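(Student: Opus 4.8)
The plan is to reduce injectivity entirely to the ``in particular'' clause of Proposition~\ref{prop:main}, which asserts that every real $r$ in $\mathrm{L}[G]$ satisfies $r \equiv_c \{s \in S \mid s \le_c r\}$. This is precisely the tool that converts information about which Sacks reals lie below a degree into information about the degree itself, so once it is invoked the argument should be immediate.

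Concretely, I would start by taking two degrees $\mathbf{r}, \mathbf{r}' \in \mathcal{D}_c$ with $\Omega(\mathbf{r}) = \Omega(\mathbf{r}')$, fix representatives $r \in \mathbf{r}$ and $r' \in \mathbf{r}'$, and recall from the definition of $\Omega$ that $\Omega(\mathbf{r}) = \{\alpha \in \kappa \mid s_\alpha \le_c r\}$ and likewise for $r'$. The key translation step is to pass from the equality of these \emph{index} sets to the equality of the associated sets of Sacks reals $S_\mathbf{r} \coloneqq \{s \in S \mid s \le_c r\}$ and $S_{\mathbf{r}'} \coloneqq \{s \in S \mid s \le_c r'\}$. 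This uses that the map $\alpha \mapsto s_\alpha$ is a bijection between $\kappa$ and $S$ (the generic Sacks reals are pairwise distinct by mutual genericity), so that $\Omega(\mathbf{r}) = \Omega(\mathbf{r}')$ forces $s_\alpha \le_c r \iff s_\alpha \le_c r'$ for every $\alpha$, and hence $S_\mathbf{r} = S_{\mathbf{r}'}$.

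With $S_\mathbf{r} = S_{\mathbf{r}'}$ in hand, I would simply apply Proposition~\ref{prop:main} twice to obtain $r \equiv_c S_\mathbf{r} = S_{\mathbf{r}'} \equiv_c r'$, whence $\mathbf{r} = \mathbf{r}'$ and $\Omega$ is injective. I do not expect any genuine obstacle here: all the combinatorial difficulty has already been absorbed into Proposition~\ref{prop:main} (and the fusion arguments of Proposition~\ref{prop:realconstr} that feed into it). The only point deserving a word of care is the bijectivity of $\alpha \mapsto s_\alpha$, i.e.\ that distinct coordinates yield distinct Sacks reals, which is exactly the mutual-genericity fact already used implicitly in Proposition~\ref{prop:realconstr}; everything else is a direct chain of equivalences.
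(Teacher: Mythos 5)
Your argument is correct and matches the paper's proof essentially verbatim: both pass from $\Omega(\mathbf{r}) = \Omega(\mathbf{r}')$ to $S_\mathbf{r} = S_{\mathbf{r}'}$ and then apply the ``in particular'' clause of Proposition~\ref{prop:main} to conclude $r \equiv_c S_\mathbf{r} = S_{\mathbf{r}'} \equiv_c r'$. Your extra remark about the injectivity of $\alpha \mapsto s_\alpha$ is a fine (if implicit in the paper) point of care, but it changes nothing about the route.
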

\begin{proof}
Fix two reals $a, b$ and suppose that $\Omega(\mathbf{a}) = \Omega(\mathbf{b})$. In particular, $S_\mathbf{a} = S_\mathbf{b}$ (see the definition of $S_\mathbf{r}$ in the previous paragraph) and, by Proposition~\ref{prop:main}, $a \equiv_c S_\mathbf{a} = S_\mathbf{b} \equiv_c b$. Thus, $\mathbf{a} = \mathbf{b}$.
\end{proof}

\begin{claim}
$\Omega$ is surjective.
\end{claim}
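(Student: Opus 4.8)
The plan is to show that every $x\in\mathcal{R}$ lies in the range of $\Omega$. The natural candidate degree is built directly from $x$: I would set $A\coloneqq\{s_\alpha\mid\alpha\in x\}$, which is a countable subset of $S$ since $x\in[\kappa]^{\le\omega}$ and the $s_\alpha$ are pairwise distinct, and then invoke Corollary~\ref{cor:realset} to fix a real $r$ with $r\equiv_c A$. The goal is to prove $\Omega(\mathbf{r})=x$. Since $\Omega(\mathbf{r})=\{\alpha\in\kappa\mid s_\alpha\le_c r\}$ and $r\equiv_c A$, this reduces to establishing the equality $\{s\in S\mid s\le_c A\}=A$; that is, that $A$ is closed under the Sacks reals it constructs.

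The inclusion $x\subseteq\Omega(\mathbf{r})$ is immediate: for $\alpha\in x$ we have $s_\alpha\in A$, hence $s_\alpha\le_c A\equiv_c r$, so $\alpha\in\Omega(\mathbf{r})$; equivalently $A\subseteq\{s\in S\mid s\le_c A\}$. The heart of the proof is the reverse inclusion $\Omega(\mathbf{r})\subseteq x$, and this is the only place where the defining property of $\mathcal{R}$ is used. I would fix $\beta\in\kappa\setminus x$ and show $s_\beta\not\le_c A$. First, because $x\in\mathcal{R}$ and $\beta\notin x$, the definition of $\mathcal{R}$ yields $s_\beta\not\le_c x$. Then I would pass to the inner model $M\coloneqq\mathrm{L}[G\upharpoonright(\kappa\setminus\{\beta\})]$. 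By Lemma~\ref{lemma:general} the set $x$ is equiconstructible with a real (enumerate a constructibly countable $D\supseteq x$ with $x\in\mathrm{L}[G\upharpoonright D]$ by a constructible bijection), so Corollary~\ref{cor:realconstr} applies and converts $s_\beta\not\le_c x$ into $x\in M$. Now $A$ can be reconstructed inside $M$: since $\beta\notin x$ we have $x\subseteq\kappa\setminus\{\beta\}$, the restricted generic sequence $\langle s_\alpha\mid\alpha\in\kappa\setminus\{\beta\}\rangle$ is definable from $G\upharpoonright(\kappa\setminus\{\beta\})$ and hence lies in $M$, and therefore $A=\{s_\alpha\mid\alpha\in x\}\in M$. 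Applying Corollary~\ref{cor:realconstr} a second time, now to $r\equiv_c A$, the membership $A\in M$ gives $s_\beta\not\le_c A$, as required.

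Combining the two inclusions yields $\Omega(\mathbf{r})=x$, proving surjectivity. I expect the reverse inclusion to be the main obstacle, precisely because it converts the hypothesis $s_\beta\not\le_c x$, phrased in terms of $x$, into the a priori stronger conclusion $s_\beta\not\le_c A$, phrased in terms of $A$, even though $A\not\le_c x$ in general (for instance, if $x$ is infinite and constructible then $A$ is an infinite set of mutually generic Sacks reals and $A\notin\mathrm{L}=\mathrm{L}(x)$). The resolution is that passing from $x$ to $A$ only adds generic reals $s_\alpha$ with $\alpha\neq\beta$, all of which are already present in $M$; hence reconstructing $A$ inside $M$ costs nothing beyond $x$, and $\mathcal{R}$-closedness is exactly what places $x$ in $M$. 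The two care points to watch are that $x$ is a countable set of ordinals rather than a real (handled by equiconstructibility with a real, so that Corollary~\ref{cor:realconstr} applies) and that the restricted generic sequence, not the full one, is what is available in $M$.
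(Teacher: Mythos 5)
Your proposal is correct and follows essentially the same route as the paper: the same candidate $A=\{s_\alpha\mid\alpha\in x\}$ with $r\equiv_c A$ via Corollary~\ref{cor:realset}, the trivial inclusion $x\subseteq\Omega(\mathbf{r})$, and the reverse inclusion obtained by using the defining property of $\mathcal{R}$ together with Corollary~\ref{cor:realconstr} to place $x$, and hence $A$, inside $\mathrm{L}[G\upharpoonright(\kappa\setminus\{\beta\})]$. Your extra remarks (coding $x$ as a real before invoking Corollary~\ref{cor:realconstr}, and recovering $A$ from $x$ via the restricted generic sequence) merely spell out steps the paper leaves implicit.
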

\begin{proof}
Fix an $x \in \mathcal{R}$. Let $A = \{s_\alpha \mid \alpha \in x\}$ and fix a real $r$ such that $r \equiv_c A$---note that such a real exists by Corollary~\ref{cor:realset}. We now prove that $\Omega(\mathbf{r}) = x$.  Clearly,  $x\subseteq \Omega(\mathbf{r})$. Now, fix an $\alpha\not\in x$ towards showing that $\alpha \not\in\Omega(\mathbf{r})$, or, equivalently, that $s_\alpha \not\le_c A$. Since $\alpha\not\in x$, we have $s_\alpha \not\le_c x$, by $\mathcal{R}$'s definition. It follows from Corollary~\ref{cor:realconstr} that $x \in \mathrm{L}[G \upharpoonright (\kappa\setminus\{\alpha\})]$. Therefore, also $A$ belongs to $\mathrm{L}[G \upharpoonright (\kappa\setminus\{\alpha\})]$, and we conclude that $s_\alpha \not\le_c A$.
\end{proof}

With the following claim, we are done. Indeed, it implies that $\Omega$ is a join-semilattice homomorphism. Moreover, as a by-product, we get that $(\mathcal{R}, \cup)$ is a join-semilattice.
\begin{claim}
For all $\mathbf{a}, \mathbf{b} \in \mathcal{D}_c$, $\Omega(\mathbf{a} \oplus \mathbf{b}) = \Omega(\mathbf{a}) \cup \Omega(\mathbf{b})$.
\end{claim}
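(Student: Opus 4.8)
The plan is to prove the two inclusions separately: the forward inclusion $\Omega(\mathbf{a})\cup\Omega(\mathbf{b})\subseteq\Omega(\mathbf{a}\oplus\mathbf{b})$ is immediate from monotonicity of $\Omega$, while the reverse inclusion is the substantive one and I would obtain it by contraposition using Corollary~\ref{cor:realconstr} twice.

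First I would record that $\Omega$ is monotone: if $\mathbf{r}\le_c\mathbf{r}'$ and $\alpha\in\Omega(\mathbf{r})$, then $\mathbf{s}_\alpha\le_c\mathbf{r}\le_c\mathbf{r}'$, so $\alpha\in\Omega(\mathbf{r}')$. Since $\mathbf{a},\mathbf{b}\le_c\mathbf{a}\oplus\mathbf{b}$ (because $a\le_c a\oplus b$ and $b\le_c a\oplus b$, as noted in the preliminaries), applying monotonicity to each gives $\Omega(\mathbf{a})\subseteq\Omega(\mathbf{a}\oplus\mathbf{b})$ and $\Omega(\mathbf{b})\subseteq\Omega(\mathbf{a}\oplus\mathbf{b})$, hence the forward inclusion.

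For the reverse inclusion $\Omega(\mathbf{a}\oplus\mathbf{b})\subseteq\Omega(\mathbf{a})\cup\Omega(\mathbf{b})$, I would argue by contraposition. Fix representatives $a\in\mathbf{a}$, $b\in\mathbf{b}$ and an $\alpha\in\kappa$ with $\alpha\notin\Omega(\mathbf{a})\cup\Omega(\mathbf{b})$, that is, $s_\alpha\not\le_c a$ and $s_\alpha\not\le_c b$. Applying the equivalence of items~\ref{cor:realconstr-1} and~\ref{cor:realconstr-2} of Corollary~\ref{cor:realconstr} in $\mathrm{L}[G]$ (exactly as this corollary is already invoked in the proof of the surjectivity claim), I obtain $a\in\mathrm{L}[G\upharpoonright(\kappa\setminus\{\alpha\})]$ and $b\in\mathrm{L}[G\upharpoonright(\kappa\setminus\{\alpha\})]$. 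Since $\mathrm{L}[a\oplus b]=\mathrm{L}[a,b]$, the real $a\oplus b$ then also belongs to $\mathrm{L}[G\upharpoonright(\kappa\setminus\{\alpha\})]$. Applying Corollary~\ref{cor:realconstr} once more, this time from item~\ref{cor:realconstr-2} back to item~\ref{cor:realconstr-1}, yields $s_\alpha\not\le_c a\oplus b$, i.e.\ $\alpha\notin\Omega(\mathbf{a}\oplus\mathbf{b})$, as required.

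The only point needing care is the passage from the forcing-language formulation of Corollary~\ref{cor:realconstr} to its use in $\mathrm{L}[G]$; I would note, as is already done implicitly in the surjectivity argument, that by genericity the equivalence ``$s_\alpha\not\le_c r \iff r\in\mathrm{L}[G\upharpoonright(\kappa\setminus\{\alpha\})]$'' holds in $\mathrm{L}[G]$ for every real $r$. Beyond this, the argument is just the elementary observation that adjoining $a\oplus b$ is the same as adjoining both $a$ and $b$. I do not expect a genuine obstacle here: all the real work has been carried out in Proposition~\ref{prop:realconstr}, Corollary~\ref{cor:realconstr}, and Proposition~\ref{prop:main}, and this claim is a short consequence. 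As remarked in the statement, it establishes that $\Omega$ is a join-semilattice homomorphism and, since $\Omega$ was already shown to be a bijection, that $(\mathcal{R},\cup)$ is a join-semilattice, completing the proof of Theorem~\ref{thm:representation}.
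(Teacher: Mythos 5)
Your proof is correct and follows essentially the same route as the paper: the forward inclusion is immediate from monotonicity, and the reverse inclusion is obtained by contraposition, applying Corollary~\ref{cor:realconstr} to place both $a$ and $b$ (hence $a\oplus b$) in $\mathrm{L}[G\upharpoonright(\kappa\setminus\{\alpha\})]$ and then concluding $\mathbf{s}_\alpha\not\le_c\mathbf{a}\oplus\mathbf{b}$. Your extra remark on transferring the forcing-language equivalence to $\mathrm{L}[G]$ by genericity is a point the paper leaves implicit, but the argument is the same.
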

\begin{proof}
Clearly, $\Omega(\mathbf{a}) \cup \Omega(\mathbf{b}) \subseteq \Omega(\mathbf{a} \oplus \mathbf{b})$. Now fix an $\alpha \not\in \Omega(\mathbf{a}) \cup \Omega(\mathbf{b})$ towards showing that $\alpha \not\in \Omega(\mathbf{a} \oplus \mathbf{b})$. By $\Omega$'s definition, $\mathbf{s}_\alpha \not\le_c \mathbf{a},\mathbf{b}$. By Corollary~\ref{cor:realconstr}, both $\mathbf{a}$ and $\mathbf{b}$ belong to $\mathrm{L}[G \upharpoonright (\kappa\setminus\{\alpha\})]$. Therefore, $\mathbf{a} \oplus \mathbf{b}$ also belong to  $\mathrm{L}[G \upharpoonright (\kappa\setminus\{\alpha\})]$, and we conclude that $\mathbf{s}_\alpha \not\le_c \mathbf{a} \oplus \mathbf{b}$. Thus, $\alpha \not\in \Omega(\mathbf{a} \oplus \mathbf{b})$.
\end{proof}
\end{proof}

\section{Proofs of Theorems~\ref{thm:main1}, \ref{thm:main2} and \ref{thm:main3}}\label{sec:mainproofs}

In this section, we prove our main results, starting with Theorem~\ref{thm:main1}. As before, we fix a generic filter $G$ for $\mathbb{S}^\kappa$ over $\mathrm{L}$.

Recall that a poset $(P, \le)$ is said to be \emph{complemented} if it is bounded and, for every $x \in P$, there is some (not necessarily unique) $y$ such that $x \wedge y = \mathbf{0}_P$ and $x \vee y = \mathbf{1}_P$.

\begin{proof}[Proof of Theorem~{\upshape\ref{thm:main1}}]
By Theorem~\ref{thm:representation}, we can substitute $(\mathcal{D}_c, \le_c)$ with $(\mathcal{R}, \subseteq)$.

Let $s_0^+ \coloneqq s_0 \cup \{0\}$ and $s_0^- \coloneqq s_0 \setminus \{0\}$. We first claim that $s_0^+ \in \mathcal{R}$ and $s_0^- \not\in \mathcal{R}$. Clearly, $s_0^+ \equiv_c s_0^- \equiv_c s_0$. Therefore, by mutual genericity of the $s_\alpha$'s, we have that $s_0$ is the only element of $S$ which is constructible from $s_0^+$ and $s_0^-$. Since, by definition, $0 \in s_0^+$ and $0 \not\in s_0^-$, we conclude that $s_0^+ \in \mathcal{R}$ and $s_0^- \not\in\mathcal{R}$. Now we proceed with the proof of the theorem. 

We claim that in $(\mathcal{R}, \subseteq)$ the set $\{s_0^+, \omega\setminus \{0\}\}$ does not have a greatest lower bound. 
Suppose towards a contradiction that such a greatest lower bound exists, and let us name it $K$. We want to show that the following holds:
\begin{equation}\label{eq:thm1}
s_0^- = \bigcup_{n \in s_0^-} \{n\}  \subseteq K\subseteq s_0^+ \cap (\omega\setminus\{0\}) = s_0^-.
\end{equation}
The first $\subseteq$ follows from having assumed $K$ to be the greatest lower bound of $\{s_0^+, \omega\setminus\{0\}\}$, together with the fact that $\{n\} \in \mathcal{R}$ and $\{n\} \subseteq s_0^+, \omega\setminus \{0\}$ for every $n \in s_0^-$; the second $\subseteq$ follows from $K$ being a lower bound of $\{s_0^+, \omega\setminus\{0\}\}$. It directly follows from \eqref{eq:thm1} that $K = s_0^-$. However, $s_0^-$ does not belong to $\mathcal{R}$, hence the contradiction.

We claim that in $(\mathcal{R}, \subseteq)$ the set $\{\{n\} \mid n \in s_0^-\}\subset \mathcal{R}$ does not have a least upper bound. If we suppose towards a contradiction that such a least upper bound exists, and we name it $K$, then \eqref{eq:thm1} would still hold for reasons analogous to the ones employed in the previous paragraph. Therefore, we would reach the same contradiction, as $K$ would coincide with $s_0^-$, which does not belong to $\mathcal{R}$. 

We now prove that $(\mathcal{R}, \subseteq)$ is not complemented. If $\kappa$ is uncountable, then, since $\kappa$ is still uncountable in $\mathsf{L}[G]$, the claim trivially follows as $(\mathcal{R}, \subseteq)$ is not bounded above. Hence suppose $\kappa = \omega$. Since every singleton $\{n\}$, with $n \in\omega$, belongs to $\mathcal{R}$, we must prove that there exists some $x \in \mathcal{R}$ such that $\omega\setminus x \not\in \mathcal{R}$. We have already proven that $s_0^+ \in \mathcal{R}$. Moreover, $\omega\setminus s_0^+ \not\in \mathcal{R}$, as $s_0 \equiv_c s_0^+ \equiv_c \omega\setminus s_0^+$, while $0 \not\in \omega\setminus s_0^+$. 
\end{proof}

We need the following couple of lemmas before proving Theorem~\ref{thm:main2}.

\begin{lemma}\label{lemma:proper}
In $\mathrm{L}[G]$, for every map $f: \kappa \rightarrow \kappa$ there exists a constructible, constructibly countable $D\subseteq \kappa$ closed under $f$ such that $f \upharpoonright D \in \mathrm{L}[G \upharpoonright D]$.
\end{lemma}
\begin{proof}
The proof is very similar to the one of Lemma~\ref{lemma:general}. We work in $\mathrm{L}$. Fix some $p \in \mathbb{S}^\kappa$ and some $\mathbb{S}^\kappa$-name $\dot{f}$ such that 
\[
p \Vdash \dot{f}:\kappa \rightarrow \kappa \text{ is a map}.
\]

Via a simple bookkeeping argument, define a sequence $(p_n, F_n)_{n\in\omega}$ such that $(p_n)_{n\in\omega}$ is a fusion sequence witnessed by $(F_n)_{n\in\omega}$ with $p_0 = p$ and such that for every $n \in\omega$, for every $\sigma \in {}^{F_n}({}^n 2)$, for every $\alpha \in F_n$, there exists some $\beta \in F_{n+1}$ such that $p_{n+1} \ast \sigma \Vdash \dot{f}(\alpha) = \beta$.

Let $q$ be the fusion of the $p_n$s and let $D$ be its support. Then, by construction, $q$ forces $D$ to be closed under $\dot{f}$ and $\dot{f} \upharpoonright D \in \mathrm{L}[\dot{G} \upharpoonright D]$. By density, we are done.
\end{proof}

\begin{lemma}\label{lemma:rigid}
In $\mathrm{L}[G]$, for every bijection $\psi:\kappa\rightarrow \kappa$ there exists a constructible sequence $(A_n)_{n\in\omega}$ of pairwise disjoint, finite subsets of $\kappa$, and a constructible sequence $(\alpha_n)_{n\in\omega}$ of ordinals in $\kappa$ such that $\psi(\alpha_n) \in A_n$ for every $n$.
\end{lemma}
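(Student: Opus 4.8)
The plan is to run a fusion argument in $\mathrm{L}$ entirely analogous to the proofs of Lemmas~\ref{lemma:general} and~\ref{lemma:proper}, and then to pass to $\mathrm{L}[G]$ by density. Working in $\mathrm{L}$, I would fix an $\mathbb{S}^\kappa$-name $\dot\psi$ for $\psi$ together with a condition $p$ forcing $\dot\psi\colon\kappa\to\kappa$ to be a bijection; it then suffices to produce, below $p$, a single condition $q$ together with two constructible sequences $(A_n)_{n\in\omega}$ and $(\alpha_n)_{n\in\omega}$ (with the $A_n$ pairwise disjoint and finite) such that $q \Vdash \dot\psi(\check\alpha_n)\in\check A_n$ for all $n$. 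Indeed, the set of such $q$ is dense below any condition forcing $\dot\psi$ to be a bijection, so fixing a $p\in G$ forcing this and choosing such a $q\in G$ yields, in $\mathrm{L}[G]$, exactly the sequences required by the statement. I would build $q$ as the fusion of a sequence $(p_n)_{n\in\omega}$ witnessed by $(F_n)_{n\in\omega}$, defining $A_n$ and $\alpha_n$ at stage $n$.

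At stage $n$, I would write $B\coloneqq\bigcup_{m<n}A_m$, a finite constructible set, and proceed by two refinements. First, using that $\dot\psi$ is forced to be a bijection, so that $\dot\psi^{-1}(\check\beta)$ is forced to be a single ordinal for each $\beta\in B$, I would shrink $p_n$ below its $n$-th splitting level on the coordinates of $F_n$---i.e. pass to some $q'\le_{F_n,n}p_n$---so that for every leaf $\sigma\in{}^{F_n}({}^n2)$ and every $\beta\in B$ the condition $q'\ast\sigma$ decides the value of $\dot\psi^{-1}(\check\beta)$; let $C$ be the finite set of ordinals thus decided. Then I would choose $\alpha_n\in\kappa\setminus(C\cup\{\alpha_m\mid m<n\})$, possible since $\kappa$ is infinite. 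Since at each leaf $\sigma$ the value $\dot\psi^{-1}(\check\beta)$ is decided to be some ordinal of $C$, hence distinct from $\alpha_n$, the condition $q'$ forces $\dot\psi(\check\alpha_n)\neq\check\beta$ for every $\beta\in B$, i.e. $q'\Vdash\dot\psi(\check\alpha_n)\notin\check B$. In a second refinement I would pass to $p_{n+1}\le_{F_n,n}q'$ deciding $\dot\psi(\check\alpha_n)$ at each leaf $\sigma\in{}^{F_n}({}^n2)$, and set $A_n$ to be the finite set of values so decided; by the previous step $A_n\cap B=\emptyset$. Finally I would take $F_{n+1}\supseteq F_n\cup C\cup A_n\cup\{\alpha_n\}$ large enough to keep $(p_n)_{n\in\omega}$ a fusion sequence.

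Let $q$ be the fusion. For each $n$ the conditions $\{q\ast\sigma\mid\sigma\in{}^{F_n}({}^n2)\}$ are predense below $q$, and each $q\ast\sigma$ extends $p_{n+1}\ast\sigma$, which forces $\dot\psi(\check\alpha_n)\in\check A_n$; hence $q\Vdash\dot\psi(\check\alpha_n)\in\check A_n$, as desired. The one genuinely load-bearing point is the use of bijectivity to secure disjointness of the $A_n$: it is precisely the ability to decide the single-valued $\dot\psi^{-1}(\check\beta)$, and thereby to pick an $\alpha_n$ whose image avoids the finite set $B$, that keeps the $A_n$ pairwise disjoint (note that this also forces the $\alpha_n$ to be distinct, since $\psi(\alpha_n)=\psi(\alpha_m)$ would lie in $A_n\cap A_m$). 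Everything else---the two-step refinement at each stage, the amalgamation of the finitely many leaf-extensions into a single $\le_{F_n,n}$-extension, and the bookkeeping ensuring $\bigcup_n F_n=\bigcup_n\operatorname{supp}(p_n)$---is routine fusion manipulation of the kind already carried out in this section, so I expect the main obstacle to be organizing this bookkeeping cleanly rather than any conceptual difficulty.
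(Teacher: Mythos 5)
Your proposal is correct and follows essentially the same fusion argument as the paper: both hinge on using bijectivity to decide $\dot\psi^{-1}$ of the previously collected values at each leaf, so that $\alpha_n$ can be chosen outside the resulting finite set of preimages, which is exactly what guarantees $A_n\cap\bigcup_{m<n}A_m=\emptyset$. The only (immaterial) organizational difference is that the paper decides $\dot\psi^{-1}(k)$ for $k\in A_m$ already at stage $m$ and accumulates the preimages into a set $E_n$, whereas you re-decide them in a first refinement at stage $n$; the paper also first reduces to $\kappa=\omega$ via Lemma~\ref{lemma:proper} instead of carrying the $F_n$ bookkeeping.
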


\begin{proof}
By Lemma~\ref{lemma:proper}, we can assume without loss of generality that $\kappa = \omega$. Fix some $p \in G$ and a $\mathbb{S}^\omega$-name $\dot{\psi}$ for $\psi$ such that 
$$
p \Vdash \dot{\psi}:\omega\rightarrow \omega \text{ is a bijection}.
$$

Working in $\mathrm{L}$, we inductively define a fusion sequence $(p_n)_{n\in\omega}$ and a sequence $(A_n)_{n\in\omega}$ of finite subsets of $\omega$ and a sequence $(\alpha_n)_{n\in\omega}$ of positive integers such that:
\begin{enumerate}[label=\roman*)]
\itemsep0.3em
\item\label{:rigid:0} $p_0 = p$.
\item\label{:rigid:1} For every $n\in\omega$, for every $\sigma \in {}^n({}^n 2)$,  $p_{n+1} \ast \sigma \Vdash \dot{\psi}(\alpha_{n}) \in A_{n}$.
\item\label{:rigid:2} For every $n \in\omega$, for every $\sigma \in {}^n({}^n 2)$, for every $k \in A_{n}$,  $p_{n+1} \ast \sigma$ decides the value of $\dot{\psi}^{-1}(k)$.
\item\label{:rigid:3} For all distinct $n,m \in \omega$, $A_n \cap A_{m} = \emptyset$.
\end{enumerate}

Now, we describe the inductive construction. Fix an $n\in\omega$ and suppose we defined $p_{m}$ for all $m \le n$ and $A_m, \alpha_m$ for all $m < n$ towards defining $p_{n+1}, A_{n}$ and $\alpha_{n}$. Let 
\[
E_n \coloneqq \Big\{\alpha \in \omega \mid \exists m < n \ \exists \tau \in {}^{m}({}^{m} 2) \  \exists k \in A_m \ \big(p_{m+1} \ast \tau \Vdash \dot{\psi}(\alpha) = k\big) \Big\}.
\]
Let $\alpha_{n}$ be any positive integer not in $E_n$---note that this is possible as $E_n$ is finite. Now, fix an enumeration $\{\sigma_0, \dots, \sigma_h\}$ of ${}^{n}({}^{n} 2)$. It is routine to define a $\le_{n}$-decreasing sequence $(q_i)_{i\le h}$ such that $q_0 \le_{n} p_n$, and $q_i \ast \sigma_i$ decides $\dot{\psi}(\alpha_{n})$ for every $i \le h$. Given such a sequence, we let  \[
A_{n} \coloneqq \Big\{k \in \omega \mid \exists i \le h \ \big(q_i \ast \sigma_i \Vdash \dot{\psi}(\alpha_{n}) = k\big)\Big\}.
\]

We claim that $A_{n} \cap A_m = \emptyset$ for every $m < n$. Indeed, suppose towards a contradiction that there exists some $m < n$ with $A_{n} \cap A_m \neq \emptyset$, and fix a $k \in A_{n} \cap A_m$. By definition of $A_{n}$, there exists an $i \le h$ such that $q_i \ast \sigma_i \Vdash \dot{\psi}(\alpha_n) = k$.  By construction, $q_i \le_{m+1} p_{m+1}$, and therefore $q_i \ast \sigma_i \le p_{m+1} \ast (\sigma_i \upupharpoonright m)$. By condition~\ref{:rigid:2}, $p_{m+1} \ast (\sigma_i \upupharpoonright m)$ already decides $\dot{\psi}^{-1}(k)$. Thus, it follows that $p_{m+1} \ast (\sigma_i \upupharpoonright m) \Vdash \dot{\psi}(\alpha_n) = k$. By definition of $E_n$, $\alpha_n \in E_n$, which is a contradiction, as we picked $\alpha_{n}$ outside of $E_n$. 

Finally, defining a $p_{n+1} \le_{n} q_h$ that satisfies condition~\ref{:rigid:2} is routine.

The sequences defined in this way satisfy \ref{:rigid:0}-\ref{:rigid:3}. Let $z$ be the fusion of the $p_n$s. Then $z$ extends $p$ and forces our sequences $(A_n)_{n \in \omega}$ and $ (\alpha_n)_{n\in\omega}$ to have the desired properties. By density, we are done.
\end{proof}
\begin{proof}[Proof of Theorem~{\upshape\ref{thm:main2}}]
By Theorem~\ref{thm:representation}, we can equivalently prove that $(\mathcal{R}, \subseteq)$ is rigid in $\mathrm{L}[G]$. 

Every automorphism $f: \mathcal{R} \rightarrow \mathcal{R}$ is canonically induced by a bijection $\psi: \kappa\rightarrow \kappa$ such that, for every $x \in [\kappa]^{\le \omega}$, $x \in \mathcal{R}$ if and only if $\psi[x] \in \mathcal{R}$.  So let us assume towards a contradiction that there exists a bijection $\psi:\kappa\rightarrow\kappa$ such that $\psi \neq \text{id}$ and, for every $x \in [\kappa]^{\le\omega}, \ x \in \mathcal{R}$ if and only if $\psi[x] \in \mathcal{R}$. 

Fix one such $\psi$, and assume $\psi(0) = 1$ just for the sake of simplicity. Fix the constructible sequences $(A_n)_{n\in\omega}$ and $(\alpha_n)_{n\in\omega}$ given by Lemma~\ref{lemma:rigid} for our $\psi$. Since the $A_n$s are mutually disjoint, we can assume without loss of generality that $0,1 \not\in A_n$ for any $n\in\omega$.

We now define (in $\mathrm{L}[G]$) an $r \in \mathcal{R}$ such that $1 \not\in r$ and $s_0 \le_c \psi^{-1}(r)$.  To see why this leads to a contradiction, note the following:  $\psi(0) = 1$ by assumption; therefore, as $1 \not\in r$, we have $0\not\in \psi^{-1}(r)$; moreover, since $\psi^{-1}(r)\in \mathcal{R}$ and $s_0 \le_c \psi^{-1}(r)$, it follows that $0 \in \psi^{-1}(r)$, and hence the contradiction. 

We are now ready to define $r$. Let 
$$
r \coloneqq \{0\} \cup \bigcup_{n \in s_0} A_n.
$$
Note that $1\not\in r$. As the $A_n$'s are finite, $r \in [\kappa]^{\le \omega}$. Moreover, since the sequence $(A_n)_{n\in\omega}$ is constructible and the $A_n$s are mutually disjoint, we have $r \equiv_c s_0$. This implies that $r \in \mathcal{R}$, as $0 \in r$ by definition of $r$. Finally, note that $s_0$ is constructible relative to $\psi^{-1}(r)$, as the following holds for every $n\in \omega$:
\begin{align*}
n \in s_0 &\iff  A_n \subseteq r\\ &\iff \psi^{-1}(A_n) \subseteq \psi^{-1}(r) \\ &\iff \alpha_n \in \psi^{-1}(r), 
\end{align*}
where the first equivalence comes directly from the definition of $r$ and the almost-disjointness of the $A_n$'s, and the last equivalence follows from the properties of the constructible sequence $(\alpha_n)_{n\in\omega}$. Hence, we reach the contradiction described in the previous paragraph.
\end{proof}

A priori, it could be the case that the rigidity of $(\mathcal{D}_c, \le_c)$ follows from the definability of many degrees in $(\mathcal{D}_c, \le_c)$. However, Theorem~\ref{thm:main3} tells us that this is not the case. Before delving into the proof of the theorem, we need the following well-known lemma.
\begin{lemma}[Symmetry Lemma, {\cite[Lemma 14.37]{MR1940513}}]
Let $\mathbb{P}$ be a forcing notion, $ \pi \in \mathrm{Aut} ( \mathbb{P} ) $ and $ \dot{x}_1 , \dots , \dot{x}_n $ be $\mathbb{P}$-names. 
For every formula \( \varphi ( x_1, \dots , x_n ) \)
$$
p \Vdash \varphi ( \dot{x}_1 , \dots , \dot{x}_n ) \iff \pi (p) \Vdash \varphi ( \pi (\dot{x}_1) , \dots , \pi( \dot{x}_n) ) .
$$
\end{lemma}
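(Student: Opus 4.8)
The plan is to first observe that the order-automorphism $\pi$ of $\mathbb{P}$ extends canonically to a bijection on the class of $\mathbb{P}$-names, defined by $\in$-recursion via $\pi(\dot{x}) = \{(\pi(\dot{y}), \pi(q)) \mid (\dot{y}, q) \in \dot{x}\}$, whose inverse is the extension of $\pi^{-1}$. The essential point is that, being an order-isomorphism of $\mathbb{P}$, the map $\pi$ preserves the ordering $\le$, compatibility and incompatibility, and the density of a set below a given condition; moreover $q \mapsto \pi(q)$ is a bijection between the conditions below $p$ and those below $\pi(p)$. Granting this, the lemma follows by induction on the complexity of $\varphi$, provided the two atomic cases are settled.

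First I would handle the atomic formulas $\dot{x} = \dot{y}$ and $\dot{x} \in \dot{y}$ by a simultaneous transfinite induction on the ranks of the names, working directly from the recursive definition of the forcing relation for atomic formulas. That recursion is phrased entirely in terms of $\le$, of membership of pairs $(\dot{z}, r)$ in names, and of density below conditions---precisely the data that $\pi$ transports faithfully. Unwinding one step of, say, the definition of $p \Vdash \dot{x} \in \dot{y}$ and applying $\pi$ to every condition and name that occurs, the induction hypothesis (available for the lower-rank names $\dot{z}$ appearing as first coordinates of the pairs in $\dot{y}$) converts the condition into the corresponding statement about $\pi(p) \Vdash \pi(\dot{x}) \in \pi(\dot{y})$; the case of $=$ is carried along in tandem, since the two definitions are mutually recursive.

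With the atomic equivalences established, the inductive step over connectives and quantifiers is routine and is driven entirely by $\pi$ being an order-isomorphism. For $\neg\varphi$ I would use that $p \Vdash \neg\varphi$ iff no $q \le p$ forces $\varphi$, and then invoke the bijection $q \mapsto \pi(q)$ between conditions below $p$ and below $\pi(p)$; the conjunction case is immediate from the induction hypothesis; and for $\exists v\,\varphi(v)$ I would use that $p \Vdash \exists v\,\varphi$ iff $\{q \le p \mid \exists \dot{z}\ (q \Vdash \varphi(\dot{z}))\}$ is dense below $p$, again using that $\pi$ is a bijection on names and preserves density below a condition. The main obstacle is precisely the atomic base case: the simultaneous induction must be arranged so that the hypothesis applies exactly to the subnames arising in the unfolding of the forcing definition, and one must verify that $\pi$ commutes with the pairing structure on names; once this is done, the remainder is bookkeeping.
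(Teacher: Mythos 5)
The paper does not prove this lemma itself; it is quoted verbatim from Jech with a citation, and the proof in that reference is exactly the standard argument you sketch. Your proposal is correct: extending $\pi$ to names by $\in$-recursion, handling the atomic cases $\dot{x}=\dot{y}$ and $\dot{x}\in\dot{y}$ by simultaneous induction on name rank through the recursive definition of the forcing relation, and then inducting on formula complexity using that $\pi$ preserves the order, density below a condition, and is a bijection on the class of names, is precisely how the cited source establishes the result.
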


Recall that, given a forcing $\mathbb{P}$, every automorphism $\pi \in \mathrm{Aut} ( \mathbb{P} )$ acts canonically on $\mathbb{P}$-names as follows: for every $\mathbb{P}$-name $\dot{x}$,
\begin{equation}\label{eq:defaut}
 \pi \dot{x} = \{ ( \pi \dot{y} , \pi p ) \mid ( \dot{y} , p ) \in \dot{x}\}.
\end{equation} 

\begin{proof}[Proof of Theorem~{\upshape\ref{thm:main3}}]
First note that $(\mathcal{D}_c, \le_c)$ has a greatest element only when $\kappa=\omega$. This fact, although being an easy consequence of Lemma~\ref{lemma:general}, follows directly from Theorem~\ref{thm:representation}.  

In order to prove Theorem~\ref{thm:main3}, we can use  Theorem~\ref{thm:representation}, and equivalently prove that no element of $\mathcal{R}$ other than $\emptyset$ (and $\omega$, in case $\kappa = \omega$) is definable in the structure $(\mathcal{R}, \subseteq)$. 

In $\mathrm{L}[G]$, fix some nonempty $a\in \mathcal{R}$ with $a \neq \kappa$ and a formula $\phi(x)$ without parameters such that $(\mathcal{R}, \subseteq) \vDash \phi(a)$. We want to find a $b \in \mathcal{R}$ with $a \neq b$ such that $(\mathcal{R}, \subseteq) \vDash \phi(b)$, thus showing that no parameter-free formula can pick out a unique element of $\mathcal{R}$ other than $\emptyset$ (and $\omega$, in case $\kappa = \omega$) in $(\mathcal{R}, \subseteq)$. 

Fix some $p \in G$ and a $\mathbb{S}^\kappa$-name $\dot{a}$ for $a$ such that 
\begin{equation}\label{eq:99}
p \Vdash (\dot{\mathcal{R}}, \subseteq) \vDash \phi(\dot{a}),
\end{equation}
where $\dot{\mathcal{R}}$ is the $\mathbb{S}^\kappa$-name for $\mathcal{R}$. Moreover, since $a$ is nonempty and different from $\kappa$, we may assume that
\[
p \Vdash 0 \in \dot{a} \text{ and } 1 \not\in \dot{a},
\]
just for the sake of simplicity.  

From now on we work in $\mathrm{L}$. We want to define an automorphism $\sigma \in \mathrm{Aut}(\mathbb{S}^\kappa \downarrow p)$---i.e., an automorphism on the principal ideal of $p$ in $\mathbb{S}^\kappa$---such that $p \Vdash ``\dot{s}_0 \equiv_c \sigma(\dot{s}_1)$'' and $p \Vdash ``\dot{s}_1 \equiv_c \sigma(\dot{s}_0)$''. We define it as follows: let $h: [p(0)] \rightarrow [p(1)]$ be the canonical homeomorphism between $[p(0)]$ and $[p(1)]$; given some $q \in \mathbb{S}^\kappa$ with $q \le p$, we let $\sigma(q)$ be the condition defined by:
\[
\forall \alpha \in \kappa, \ \ \sigma(q)(\alpha) \coloneqq \begin{cases}
q(\alpha) &\text{ if } \alpha\neq 0,1\\
\big\{t \in {}^{<\omega}2 \mid h[N_t] \cap [q(1)] \neq \emptyset\big\} &\text{ if } \alpha=0\\
\big\{t \in {}^{<\omega}2 \mid h^{-1}(N_t) \cap [q(0)] \neq \emptyset\big\} &\text{ if } \alpha=1
\end{cases}
\]
By construction, $\sigma(p) = p$. Moreover, it follows from the definition of $\sigma$ that $p$ forces $\sigma(\dot{s}_0) = h^{-1}(\dot{s}_1)$ and $\sigma(\dot{s}_1) = h(\dot{s}_0)$. Since the homeomorphism $h$ is constructibly coded, we have $p \Vdash ``\dot{s}_0 \equiv_c \sigma(\dot{s}_1)$'' and $p \Vdash ``\dot{s}_1 \equiv_c \sigma(\dot{s}_0)$'' as desired. Moreover, $\sigma(\dot{s}_\alpha) = \dot{s}_\alpha$ for every $\alpha > 1$.

Let $\theta: \kappa\rightarrow \kappa$ be the bijection that simply swaps $0$ and $1$, leaving every other ordinal in $\kappa$ fixed. Clearly, $\theta \circ \theta = \mathrm{id}$. Let $\dot{f}$ be the $\mathbb{S}^\kappa$-name for the function that maps every $x \in [\kappa]^{\le \omega}$ to $\theta[x]$. Clearly, $\Vdash \dot{f} = \dot{f}^{-1}$. We claim that
\begin{equation}\label{eq:100}
p \Vdash \dot{f} \upharpoonright \dot{\mathcal{R}} \text{ is an isomorphism from } (\dot{\mathcal{R}}, \subseteq) \text{ to } (\sigma(\dot{\mathcal{R}}), \subseteq).
\end{equation}

In order to show that \eqref{eq:100} holds, let us analyze how $\sigma$ acts on the name $\dot{\mathcal{R}}$. By definition of $\mathcal{R}$, we have
\[
p \Vdash \forall x \big(x \in \dot{\mathcal{R}} \text{ iff } x \in [\kappa]^{\le\omega} \text{ and } \forall \alpha \in \kappa \ (\dot{s}_\alpha \le_c x \Rightarrow \alpha \in x)\big).
\]
By the Symmetry Lemma,
\[
\sigma(p) = p \Vdash \forall x \big(x \in \sigma(\dot{\mathcal{R}}) \text{ iff } x \in [\kappa]^{\le\omega} \text{ and } \forall \alpha \in \kappa \ (\sigma(\dot{s}_\alpha) \le_c x \Rightarrow \alpha \in x)\big),
\]
But since $p$ forces $\sigma(\dot{s}_\alpha) \equiv_c \dot{s}_{\theta(\alpha)}$, we get
\[
p \Vdash \forall x \big(x \in \sigma(\dot{\mathcal{R}}) \text{ iff } x \in [\kappa]^{\le\omega} \text{ and } \forall \alpha \in \kappa \ (\dot{s}_{\theta(\alpha)} \le_c x \Rightarrow \theta(\alpha) \in \dot{f}(x))\big).
\]
Finally, as $\Vdash \forall x \in [\kappa]^{\le\omega} \ (\dot{f}(x) \equiv_c x)$, we conclude
\[
p \Vdash \forall x (x \in \sigma(\dot{\mathcal{R}}) \text{ iff } \dot{f}(x) \in \dot{\mathcal{R}}),
\]
which suffices to prove that \eqref{eq:100} holds. By the Symmetry Lemma and \eqref{eq:99}, it follows that
$$
p \Vdash (\sigma(\dot{\mathcal{R}}), \subseteq) \vDash \phi(\sigma(\dot{a})).
$$
By \eqref{eq:100},
$$
p \Vdash (\dot{\mathcal{R}}, \subseteq) \vDash \phi\big(\dot{f}(\sigma(\dot{a}))\big).
$$

By assumption $p \Vdash 0 \in \dot{a}$ and $1 \not\in \dot{a}$. Therefore, by the Symmetry Lemma, $p$ forces $0 \in \sigma(\dot{a})$ and $1 \not\in \sigma(\dot{a})$. Thus,  $p$ forces $1 = \theta(0) \in \dot{f}(\sigma(\dot{a}))$, and, in particular, it forces $\dot{a} \neq \dot{f}(\sigma(\dot{a}))$.

Going back to $\mathrm{L}[G]$, if we let $b$ be the evaluation $\dot{f}(\sigma(\dot{a}))$ according to the generic filter $G$, then $b \neq a$ and $(\mathcal{R}, \subseteq) \vDash \phi(b)$, as we wanted to show.
\end{proof}

\paragraph{\textbf{Non-real degrees.}} The situation becomes more complicated if we are interested in the constructibility degrees of $\mathrm{L}[G]$, without focusing solely on the real ones. For instance, consider the set $\mathbf{S} \coloneqq \{\mathbf{s}_n \mid n\in \omega\}$, i.e., the set of all the constructibility degrees of the generic Sacks reals. It can be shown, via an argument very similar to the one employed in Theorem~\ref{thm:main3}, that the set $\mathbf{S}$ is amorphous in $\mathrm{L}(\mathbf{S})$. Recall that a set is said to be \emph{amorphous} if it is infinite and it is not the disjoint union of two infinite subsets. This implies, in particular, that in $\mathrm{L}[G]$, the set $\mathbf{S}$ is not equiconstructible with any real. If it were, then $\mathrm{L}(\mathbf{S})$ would satisfy the axiom of choice, and thus there would be no amorphous sets in it. 

\section{Proof of Theorem~\ref{thm:main4}}\label{sec:last}

By Theorems~\ref{thm:main1}-\ref{thm:main3}, we cannot hope to improve Theorem~\ref{thm:representation} by devising, in $\mathrm{L}[G]$, an isomorphism between $(\mathcal{D}_c, \le_c)$ and $([\kappa]^{\le\omega}, \subseteq)$. This is because $([\kappa]^{\le\omega}, \subseteq)$ is a $\sigma$-complete lattice and it is far from being rigid, having $2^\kappa$-many automorphisms. In this section, we show that this fact is not accidental, as we prove in $\mathsf{ZF}$ that $(\mathcal{P}(\omega), \subseteq)$ cannot be isomorphic to any ideal of $(\mathcal{D}_c, \le_c)$.

\begin{lemma}\label{prop:neg}
Suppose that $f: (\mathcal{P}(\omega), \subseteq) \rightarrow (\mathcal{D}_c, \le_c)$ is an order-embedding, then
\[
\big\{f(\{n\}) \mid n \in \omega\big\} \le_c f(\omega) \Longrightarrow \mathbb{R} \subseteq \mathrm{L}[f(\omega)].
\]
\end{lemma}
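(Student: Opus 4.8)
The plan is to fix a representative $w \in f(\omega)$ and write $\mathbf{w} = f(\omega)$, so that $\mathrm{L}[f(\omega)] = \mathrm{L}[w]$ and $\mathrm{L}[w] \models \mathrm{ZFC}$, and then to show that every real $X \subseteq \omega$ can be \emph{decoded} inside $\mathrm{L}[w]$ from $w$ together with the data supplied by the hypothesis. The guiding identity is the order-reflecting property of $f$: for all $n\in\omega$ and all $X \subseteq \omega$,
\[
n \in X \iff \{n\} \subseteq X \iff f(\{n\}) \le_c f(X).
\]
If I can make both sides of this equivalence evaluable inside $\mathrm{L}[w]$, uniformly in $n$, then I will have defined $X$ inside $\mathrm{L}[w]$.

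First I would set up the two ingredients. Since $\{n\} \subseteq \omega$, the embedding gives $f(\{n\}) \le_c f(\omega)$ for every $n$; hence every representative of the degree $f(\{n\})$ lies in $\mathrm{L}[w]$, and $f(\{n\})$ is computed identically in $\mathrm{L}[w]$ and in $V$ by absoluteness of $\equiv_c$. The hypothesis $\{f(\{n\}) \mid n\in\omega\} \le_c f(\omega)$ is precisely what upgrades ``each $f(\{n\})$ individually lies below $\mathbf{w}$'' to ``the whole indexed assignment $n \mapsto f(\{n\})$ is available in $\mathrm{L}[w]$''; working inside $\mathrm{L}[w]$ (which satisfies choice) I then fix a sequence $\langle a_n : n\in\omega\rangle \in \mathrm{L}[w]$ with $a_n \in f(\{n\})$. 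Second, take an arbitrary real $X \subseteq \omega$. Again $X \subseteq \omega$ gives $f(X) \le_c f(\omega) = \mathbf{w}$, so I may fix a representative $c_X$ of $f(X)$ with $c_X \in \mathrm{L}[w]$.

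With these in hand the decoding is immediate: transporting the displayed equivalence through the chosen representatives,
\[
n \in X \iff f(\{n\}) \le_c f(X) \iff a_n \le_c c_X \iff a_n \in \mathrm{L}[c_X],
\]
and the rightmost condition is absolute and uses only the parameters $\langle a_n \rangle$ and $c_X$, both of which belong to $\mathrm{L}[w]$. Hence $X = \{ n \in \omega \mid a_n \in \mathrm{L}[c_X] \} \in \mathrm{L}[w]$. As $X$ was arbitrary, $\mathbb{R} \subseteq \mathrm{L}[w] = \mathrm{L}[f(\omega)]$, which is the conclusion.

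The step I expect to be delicate is the extraction of the sequence $\langle a_n \rangle \in \mathrm{L}[w]$, i.e.\ the precise use of the hypothesis. It is \emph{not} enough to know merely that the unindexed set $D = \{ f(\{n\}) \mid n\in\omega \}$ belongs to $\mathrm{L}[w]$: for a fixed $X$ one can always compute $\{ \mathbf{d} \in D \mid \mathbf{d} \le_c c_X \} = \{ f(\{n\}) \mid n \in X \}$ inside $\mathrm{L}[w]$, but recovering $X$ from this subset of $D$ requires inverting $n \mapsto f(\{n\})$, and any enumeration of $D$ internal to $\mathrm{L}[w]$ differs from the true one by a permutation of $\omega$ lying \emph{outside} $\mathrm{L}[w]$, which scrambles $X$ beyond recovery. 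Thus the hypothesis must be read, and used, as asserting that the indexed assignment $n \mapsto f(\{n\})$---not just its range---is constructible from $f(\omega)$; this is exactly what secures $\langle a_n \rangle \in \mathrm{L}[w]$ and makes the decoding go through. Everything else is routine absoluteness of the constructibility relation together with the order-reflecting property of $f$.
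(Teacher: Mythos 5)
There is a genuine gap, and it is exactly at the step you flagged as delicate. The hypothesis of the lemma is about the \emph{set} $A=\{f(\{n\})\mid n\in\omega\}$ being constructible from $f(\omega)$ --- that is all the notation asserts, and, crucially, that is all that is available in the application (in the proof of Theorem~\ref{thm:main4} one only gets that the set of atoms of $\mathcal{D}_c$ below $f(\omega)$ lies in $\mathrm{L}[f(\omega)]$, with no canonical indexing). Your proof instead requires the indexed sequence $\langle f(\{n\})\mid n\in\omega\rangle$ to lie in $\mathrm{L}[f(\omega)]$, and you justify this by claiming that the set-level hypothesis is insufficient because an internal enumeration of $A$ differs from the true one by a permutation outside $\mathrm{L}[f(\omega)]$ that ``scrambles $X$ beyond recovery.'' That claim is false, and so your argument proves a strictly weaker lemma than the one stated (and one too weak for its intended use).

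The paper's proof shows how to work with the unindexed set. Fix any injection $g:\omega\to A$ with $g\in\mathrm{L}[f(\omega)]$ (this exists since $\mathrm{L}[f(\omega)]$ satisfies choice and $A$ is infinite). Given an arbitrary real $x$, do not try to decode $x$ directly; instead push it through the unknown mismatch by setting
\[
y\coloneqq\big\{n\in\omega\mid \exists m\in x\ \big(f(\{n\})=g(m)\big)\big\},
\]
a set defined in $\mathrm{V}$, not in $\mathrm{L}[f(\omega)]$. Since $y\subseteq\omega$, the embedding gives $f(y)\le_c f(\omega)$, so a representative of $f(y)$ lies in $\mathrm{L}[f(\omega)]$. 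The order-embedding property yields $\{\mathbf{d}\in A\mid \mathbf{d}\le_c f(y)\}=\{f(\{n\})\mid n\in y\}=g[x]$, and this set is computable in $\mathrm{L}[f(\omega)]$ from $A$, $g$ and $f(y)$ by absoluteness of $\le_c$; applying $g^{-1}$ recovers $x$. So the external permutation is harmless: it is absorbed into the choice of $y$, whose degree $f(y)$ is guaranteed to be below $f(\omega)$ regardless of how $y$ was produced. To repair your proof you would either need to adopt this re-indexing device or prove separately that the indexed sequence is constructible from $f(\omega)$ --- which does not follow from the stated hypothesis.
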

\begin{proof}
Fix an order-embedding $f:(\mathcal{P}(\omega), \subseteq) \rightarrow (\mathcal{D}_c, \le_c)$ such that
\[
A \coloneqq \big\{f(\{n\}) \mid n\in \omega\big\} \le_c f(\omega).
\]
Note that $A$ is the image via $f$ of the set of atoms of $\mathcal{P}(\omega)$. Fix a real $x$. Since in $\mathrm{L}[f(\omega)]$ the axiom of choice holds, we can fix an injection $g: \omega \rightarrow A$ such that $g \le_c f(\omega)$. Let 
\[
y \coloneqq \big\{n \in \omega \mid \exists m \in x \ \big(f(\{n\}) = g(m)\big)\big\}.
\]
Since $f$ is an order-embedding, we must have $f(y) \le_c f(\omega)$. But then 
\[
x =  g^{-1}(\{\mathbf{d} \in A \mid \mathbf{d} \le_c f(y)\})\le_c f(\omega),
\]
where the equality follows from the fact that for any $n \in\omega$, $f(\{n\}) \le_c f(y)$ if and only if $n \in y$, since $f$ is an order-embedding; thus, $x$ is constructible relative to $f(\omega)$ because $A$, $g$ and $f(y)$ are constructible relative to $f(\omega)$, and because of the absoluteness of the constructibility preorder. Thus, every real is constructible relative to $f(\omega)$.
\end{proof}

\begin{proof}[Proof of Theorem~{\upshape\ref{thm:main4}}]
Suppose that $g:(\mathcal{P}(\omega), \subseteq) \rightarrow (\mathcal{D}_c, \le_c)$ is an order-embedding with $\mathrm{ran}(g)$ being an ideal of $(\mathcal{D}_c, \le_c)$, towards a contradiction. Consider the map $f:(\mathcal{P}(\omega), \subseteq) \rightarrow (\mathcal{D}_c, \le_c)$ defined as follows: for every $x \in \mathcal{P}(\omega)$, let $f(x) = g(\{n+1 \mid n \in x\})$. Clearly, $f$ is still an order-embedding, and its range is still an ideal of $\mathcal{D}_c$. Moreover, $g(\{0\}) \not\le_c g(\omega\setminus \{0\}) = f(\omega)$, since $g$ is assumed to be an embedding. Therefore, the real degree $g(\{0\})$ witnesses that not every real is constructible relative to $f(\omega)$. Furthermore, 
\begin{equation}\label{eq:finalsection}
\big\{f(\{n\}) \mid n \in \omega\big\} = \{\mathbf{d} \le_c f(\omega) \mid \mathbf{d}\text{ is an atom of } \mathcal{D}_c\} \le_c f(\omega),
\end{equation}
where the equality follows from $\mathrm{ran}(f)$ being an ideal of $\mathcal{D}_c$, and the second $\le_c$ follows from the absoluteness of the constructibility preorder. However, \eqref{eq:finalsection} contradicts Lemma~\ref{prop:neg}.
\end{proof}

We conclude with the following question, which asks to what extent Theorem~\ref{thm:main4} still holds if we weaken the assumption on the range of the isomorphism.

\begin{question}
Is it consistent with $\mathsf{ZFC}$ that $(\mathcal{P}(\omega), \subseteq)$ embeds into $(\mathcal{D}_c, \le_c)$ as a lattice? As a join-semilattice? As partial order?
\end{question}

\printbibliography

@book {MR2176267,
    AUTHOR = {Ciesielski, Krzysztof and Pawlikowski, Janusz},
     TITLE = {The covering property axiom, {CPA}},
    SERIES = {Cambridge Tracts in Mathematics},
    VOLUME = {164},
      NOTE = {A combinatorial core of the iterated perfect set model},
 PUBLISHER = {Cambridge University Press, Cambridge},
      YEAR = {2004},
      ISBN = {0-521-83920-3}
}

@inproceedings {MR276079,
    AUTHOR = {Sacks, Gerald E.},
     TITLE = {Forcing with perfect closed sets},
 BOOKTITLE = {Axiomatic {S}et {T}heory ({P}roc. {S}ympos. {P}ure {M}ath.,
              {V}ol. {XIII}, {P}art {I}, {U}niv. {C}alifornia, {L}os
              {A}ngeles, {C}alif., 1967)},
    SERIES = {Proc. Sympos. Pure Math., XIII, Part I},
     PAGES = {331--355},
 PUBLISHER = {Amer. Math. Soc., Providence, RI},
      YEAR = {1971}
}

@article {MR794485,
    AUTHOR = {Baumgartner, James E.},
     TITLE = {Sacks forcing and the total failure of {M}artin's axiom},
   JOURNAL = {Topology Appl.},
  FJOURNAL = {Topology and its Applications},
    VOLUME = {19},
      YEAR = {1985},
    NUMBER = {3},
     PAGES = {211--225},
      ISSN = {0166-8641}
}

@article {MR1978947,
    AUTHOR = {Zapletal, Jind\v{r}ich},
     TITLE = {Isolating cardinal invariants},
   JOURNAL = {J. Math. Log.},
  FJOURNAL = {Journal of Mathematical Logic},
    VOLUME = {3},
      YEAR = {2003},
    NUMBER = {1},
     PAGES = {143--162},
      ISSN = {0219-0613}
}

@incollection {MR1900391,
    AUTHOR = {Hru\v{s}\'{a}k, Michal},
     TITLE = {Life in the {S}acks model},
      NOTE = {29th Winter School on Abstract Analysis (Lhota nad
              Rohanovem/Zahr\'{a}dky u \v{C}esk\'{e} L\'{\i}py, 2001)},
   JOURNAL = {Acta Univ. Carolin. Math. Phys.},
  FJOURNAL = {Acta Universitatis Carolinae. Mathematica et Physica},
    VOLUME = {42},
      YEAR = {2001},
    NUMBER = {2},
     PAGES = {43--58},
      ISSN = {0001-7140}
}

@article {MR2640071,
    AUTHOR = {Larson, Paul and Neeman, Itay and Shelah, Saharon},
     TITLE = {Universally measurable sets in generic extensions},
   JOURNAL = {Fund. Math.},
  FJOURNAL = {Fundamenta Mathematicae},
    VOLUME = {208},
      YEAR = {2010},
    NUMBER = {2},
     PAGES = {173--192},
      ISSN = {0016-2736}
}

@article {MR1942314,
    AUTHOR = {Geschke, Stefan and Kojman, Menachem and Kubi\'{s}, Wies{\l}aw and Schipperus, Rene},
     TITLE = {Convex decompositions in the plane and continuous pair
              colorings of the irrationals},
   JOURNAL = {Israel J. Math.},
  FJOURNAL = {Israel Journal of Mathematics},
    VOLUME = {131},
      YEAR = {2002},
     PAGES = {285--317},
      ISSN = {0021-2172}
}

@article {MR3981132,
    AUTHOR = {Brendle, J\"{o}rg and Fischer, Vera and Khomskii, Yurii},
     TITLE = {Definable maximal independent families},
   JOURNAL = {Proc. Amer. Math. Soc.},
  FJOURNAL = {Proceedings of the American Mathematical Society},
    VOLUME = {147},
      YEAR = {2019},
    NUMBER = {8},
     PAGES = {3547--3557},
      ISSN = {0002-9939}
}

@article {MR3702751,
    AUTHOR = {Sobota, Damian and Zdomskyy, Lyubomyr},
     TITLE = {The {N}ikodym property in the {S}acks model},
   JOURNAL = {Topology Appl.},
  FJOURNAL = {Topology and its Applications},
    VOLUME = {230},
      YEAR = {2017},
     PAGES = {24--34},
      ISSN = {0166-8641}
}

@article {MR987317,
    AUTHOR = {Hart, Klaas Pieter},
     TITLE = {Ultrafilters of character {$\omega_1$}},
   JOURNAL = {J. Symbolic Logic},
  FJOURNAL = {The Journal of Symbolic Logic},
    VOLUME = {54},
      YEAR = {1989},
    NUMBER = {1},
     PAGES = {1--15},
      ISSN = {0022-4812}
}

@article {MR556894,
    AUTHOR = {Baumgartner, James E. and Laver, Richard},
     TITLE = {Iterated perfect-set forcing},
   JOURNAL = {Ann. Math. Logic},
  FJOURNAL = {Annals of Mathematical Logic},
    VOLUME = {17},
      YEAR = {1979},
    NUMBER = {3},
     PAGES = {271--288},
      ISSN = {0003-4843}
}

@book {MR2940963,
    AUTHOR = {Groszek, Marcia},
     TITLE = {Iterated perfect set forcing and degrees of constructibility},
      NOTE = {Thesis (Ph.D.)--Harvard University},
 PUBLISHER = {ProQuest LLC, Ann Arbor, MI},
      YEAR = {1981}
}

@article {MR1777770,
    AUTHOR = {Kanovei, Vladimir},
     TITLE = {On non-wellfounded iterations of the perfect set forcing},
   JOURNAL = {J. Symbolic Logic},
  FJOURNAL = {The Journal of Symbolic Logic},
    VOLUME = {64},
      YEAR = {1999},
    NUMBER = {2},
     PAGES = {551--574},
      ISSN = {0022-4812}
}

@book {MR1940513,
    AUTHOR = {Jech, Thomas},
     TITLE = {Set theory},
    SERIES = {Springer Monographs in Mathematics},
      NOTE = {The third millennium edition, revised and expanded},
 PUBLISHER = {Springer-Verlag, Berlin},
      YEAR = {2003},
      ISBN = {3-540-44085-2}
}

@incollection {MR2155534,
    AUTHOR = {Geschke, Stefan and Quickert, Sandra},
     TITLE = {On {S}acks forcing and the {S}acks property},
 BOOKTITLE = {Classical and new paradigms of computation and their
              complexity hierarchies},
    SERIES = {Trends Log. Stud. Log. Libr.},
    VOLUME = {23},
     PAGES = {95--139},
 PUBLISHER = {Kluwer Acad. Publ., Dordrecht},
      YEAR = {2004}
}

@inproceedings {MR520202,
    AUTHOR = {Truss, John},
     TITLE = {A note on increasing sequences of constructibility degrees},
 BOOKTITLE = {Higher set theory ({P}roc. {C}onf., {M}ath. {F}orschungsinst.,
              {O}berwolfach, 1977)},
    SERIES = {Lecture Notes in Math.},
    VOLUME = {669},
     PAGES = {473--476},
 PUBLISHER = {Springer, Berlin},
      YEAR = {1978}
}

@article {MR850218,
    AUTHOR = {Abraham, Uri and Shore, Richard A.},
     TITLE = {The degrees of constructibility of {C}ohen reals},
   JOURNAL = {Proc. London Math. Soc. (3)},
  FJOURNAL = {Proceedings of the London Mathematical Society. Third Series},
    VOLUME = {53},
      YEAR = {1986},
    NUMBER = {2},
     PAGES = {193--208},
      ISSN = {0024-6115}
}

@inproceedings {MR505487,
    AUTHOR = {Adamowicz, Zofia},
     TITLE = {Constructible semi-lattices of degrees of constructibility},
 BOOKTITLE = {Set theory and hierarchy theory, {V} ({P}roc. {T}hird {C}onf.,
              {B}ierutowice, 1976)},
    SERIES = {Lecture Notes in Math., Vol. 619},
     PAGES = {1--43},
 PUBLISHER = {Springer, Berlin-New York},
      YEAR = {1977}
}

@incollection {MR1233817,
    AUTHOR = {Shore, Richard A.},
     TITLE = {Degrees of constructibility},
 BOOKTITLE = {Set theory of the continuum ({B}erkeley, {CA}, 1989)},
    SERIES = {Math. Sci. Res. Inst. Publ.},
    VOLUME = {26},
     PAGES = {123--135},
 PUBLISHER = {Springer, New York},
      YEAR = {1992}
}

@article {MR968536,
    AUTHOR = {Groszek, Marcia and Shore, Richard A.},
     TITLE = {Initial segments of the degrees of constructibility},
   JOURNAL = {Israel J. Math.},
  FJOURNAL = {Israel Journal of Mathematics},
    VOLUME = {63},
      YEAR = {1988},
    NUMBER = {2},
     PAGES = {149--177},
      ISSN = {0021-2172}
}

@article {MR1295981,
    AUTHOR = {Groszek, Marcia},
     TITLE = {{$\omega^*_1$} as an initial segment of the c-degrees},
   JOURNAL = {J. Symbolic Logic},
  FJOURNAL = {The Journal of Symbolic Logic},
    VOLUME = {59},
      YEAR = {1994},
    NUMBER = {3},
     PAGES = {956--976},
      ISSN = {0022-4812}
}

@article {MR1680978,
    AUTHOR = {Kanovei, Vladimir and Zapletal, Jind\v{r}ich},
     TITLE = {The pyramidal structure of degrees of constructibility},
   JOURNAL = {Mat. Zametki},
  FJOURNAL = {Matematicheskie Zametki},
    VOLUME = {63},
      YEAR = {1998},
    NUMBER = {4},
     PAGES = {632--635},
      ISSN = {0025-567X}
}

@misc{Sierpinski,
      title={The breadth of constructibility degrees and definable Sierpi\'{n}ski's coverings}, 
      author={Alessandro Andretta and Lorenzo Notaro},
      year={2024},
      eprint={2408.10182},
      archivePrefix={arXiv},
      primaryClass={math.LO}
}
\end{document}